\documentclass[11pt]{amsart}
\usepackage{amssymb}
\usepackage{amsmath}
\usepackage{graphicx}
\usepackage{hyperref}
\usepackage{xcolor}
\usepackage{soul}
\usepackage{tikz}

\newtheorem{thm}{Theorem}[section]
\newtheorem{lem}[thm]{Lemma}
\newtheorem{fact}[thm]{Fact}

\newtheorem{Q}[thm]{Question}
\newtheorem{Def}[thm]{Definition}
\newtheorem{prop}[thm]{Proposition}
\newtheorem{rem}[thm]{Remark}

\newcommand{\bdfn}{\begin{Def} \rm}
\newcommand{\edfn}{\end{Def}}

\newcommand{\cross}{\mathbin{\tikz [x=1.4ex,y=1.4ex,line width=.2ex] \draw (0,0)--(1,1) (0,1)--(1,0);}}

\newcommand{\beqa}{\begin{eqnarray*}}
\newcommand{\eeqa}{\end{eqnarray*}}

\newcounter{cnt1}
\newcounter{cnt2}
\newcounter{cnt3}
\newcounter{cnt4}
\newcommand{\blr}{\begin{list}{$($\roman{cnt1}$)$} {\usecounter{cnt1}
 \setlength{\topsep}{0pt} \setlength{\itemsep}{0pt}}}
\newcommand{\blR}{\begin{list}{\Roman{cnt4}.\ } {\usecounter{cnt4}
 \setlength{\topsep}{0pt} \setlength{\itemsep}{0pt}}}
\newcommand{\bla}{\begin{list}{$(\alph{cnt2})$} {\usecounter{cnt2}
 \setlength{\topsep}{0pt} \setlength{\itemsep}{0pt}}}
\newcommand{\bln}{\begin{list}{$($\arabic{cnt3}$)$} {\usecounter{cnt3}
 \setlength{\topsep}{0pt} \setlength{\itemsep}{0pt}}}
\newcommand{\el}{\end{list}}

\begin{document}

\title[\tiny{A study on coreflexive Banach spaces}]{A study on coreflexive Banach spaces}

\author[Dwivedi]{Saurabh Dwivedi}

\address{Shiv Nadar Institution of Eminence. Gautam Buddha Nagar Delhi NCR, Uttar Pradesh-201314, India}

\email{sd605@snu.edu.in, saurabhdewedi876@gmail.com}

\subjclass[2020]{Primary 46A22, 46B10, 46B25; Secondary 46B20, 46B22. \hfill
\textbf{\today}}

\keywords{quasi-reflexive spaces, coreflexive spaces, weakly compactly generated spaces, weak$^*$-sequential density}
\begin{abstract}
In this paper, we study non-reflexive Banach spaces $X$ for which the quotient space $X^{**}/X$ is reflexive. Such spaces were first introduced by James R.~Clark in \cite{CJ}, where they were called coreflexive spaces. In Theorem~\ref{T25}, we show that a space $X$ is coreflexive if and only if every separable subspace $Y\subseteq X$ is coreflexive, provided that $X$ is w$^*$-sequently dense in its bidual $X^{**}$. We show that coreflexive spaces are stable under $\ell^{p}$-sum for $1<p<\infty$. In Theorem~\ref{T210}, we show that if $X$ is a coreflexive space such that $X^{**}/X$ is separable, then the space of Bochner $p$-integrable functions, $L^{p}(\mu,X)$ is coreflexive for $1<p<\infty$. We conclude by providing an alternative proof of the fact, in a quasi-reflexive space $X$, w-PC's of the unit ball $X_{1}$ continues to have the same property in all the higher even-order dual unit balls of $X$.
\end{abstract}
\maketitle

\textbf{To appear in Proceedings - Mathematical Sciences.}
\section{Introduction}
Let $X$ be a Banach space and let $X^*$ denotes its dual space. Let $X_{1}$ denotes the closed unit ball and let $S(X)$ be the unit sphere of $X$. We denote the canonical embedding of $X$ in to its bidual $X^{**}$ by identifying $X$ with its image under the map $J_{X}:X\to X^{**}$, where $J_{X}$ is the canonical embedding. For clarity, we sometimes write $J_{X}(X)$ to explicitly refer to the embedded copy of $X$ in $X^{**}$. More generally, for $n\geq 1$, we use $X^{(n)}$ to denote the $n^{th}$-order dual space of $X$. In this context, the inclusion $X^{(n)}\subseteq X^{(n+2)}$ refers to the canonical embedding of $X^{(n)}$ in to $X^{(n+2)}$. We begin by defining a few generalised notions of reflexivity. We recall that a space $X$ is said to be \textbf{quasi-reflexive} if the quotient space $X^{**}/X$ is finite dimensional. A detailed study of quasi-reflexive spaces can be found in \cite{Ho} and \cite{CB}. Motivated by this, we recall from \cite{CJ} that a space $X$ is said to be \textbf{coreflexive} if the quotient space $X^{**}/X$ is reflexive. Clearly, every quasi-reflexive space is coreflexive. We recall that a space $X$ is said to be a \textbf{weakly compactly generated} (WCG) space if there exists a weakly compact subset $K\subseteq X$ such that $X=\overline{\textnormal{span}}(K)$ (norm closure). It is easy to see that both reflexive and separable spaces are weakly compactly generated. Throughout, we use the symbol $\cong$ to denote an isomorphism. The motivation of studying coreflexive spaces comes from the work of Davis, W. J., Figiel, T., Johnson, W. B.,  Pelczynski, A. (see \cite[Proposition~1]{TF}), where it was shown that given a weakly compactly generated space $X$, there exists a space $Z$ such that $Z^{**}/Z\cong X$. In particular, As $X$ is reflexive, it is weakly compactly generated, so there exists a space $Z$ such that $Z^{**}/Z\cong X$. Clearly, such a space $Z$ is a coreflexive space. These spaces occurs naturally and have many geometric properties similar to the reflexive spaces. For example, recall that a point $x^*\in X_{1}^*$ is said to be a \textbf{w$^*$-w} PC if the identity map $I:(X_{1}^{*},\textnormal{w}^*)\rightarrow(X_{1}^{*},\textnormal{w})$ is continuous at $x^*$. We say that a point $x^*\in X_{1}^*$ is w$^*$-w PC in all the higher odd-order dual unit balls of $X$, if for every odd $n\in \mathbb{N}$, the canonical image of $x^*$ in to $X_{1}^{(n)}$ is a w$^*$-w PC. In \cite [Proposition~5.3]{STT}, the authors have shown that if $X$ is coreflexive, then w$^*$-w PC's of $X_{1}^*$ continues to have the same property in all the higher odd-order dual unit balls of $X$. Let $\{X_{k}\}_{k\in \mathbb{N}}$ be a family of Banach spaces. For $1\leq p<\infty$, we define $\bigoplus_{{\ell^{p}}}X_{k}=\{(x_k): x_k\in X_k, \sum_{k\in \mathbb{N}}\|x_k\|^{p}<\infty\}$, equipped with the $\ell^{p}$-norm. We define $\bigoplus_{{\ell^{\infty}}}X_{k}=\{(x_k): x_k\in X_k, \textnormal{ sup}_{k\in \mathbb{N}}\|x_{k}\|<\infty\}$, equipped with the norm $\|(x_{k})\|_{{\infty}}=\textnormal{ sup}_{k\in \mathbb{N}}\|x_{k}\|$. Let $(\Omega, \Sigma, \mu)$ be a measure space, where $\mu$ is a positive measure. Let $L^p(\mu,X)$ denotes the Bochner space of $p$-integrable functions taking values in $X$, equipped with the norm $\|f\|_{p}=\left(\int_{\Omega}\|f(\omega)\|^{p}\right)^{\frac{1}{p}}d(\mu(\omega))$, for $f\in L^p(\mu,X)$. Further discussion on this topic can be found in \cite[Chapter~4]{DU}. 

The main results of this paper are summarized as follows. As a background fact, it is well known (see Proposition~\ref{T22}) that if a space $X$ is coreflexive, then every closed subspace $Y$ of $X$ is coreflexive. As a consequence of the \textbf{Eberlein–Šmul'yan} theorem (See \cite[Pg.~149]{Ho}), we know that a space $X$ is reflexive if and only if every closed separable subspace of $X$ is reflexive. An analogous result to this is established in Theorem~\ref{T25}, where it is shown that a space $X$ is coreflexive if and only if every separable subspace $Y\subseteq X$ is coreflexive, provided that $X$ is w$^*$-sequently dense in its bidual $X^{**}$. In Theorem~\ref{T27}, we show the stability of coreflexive spaces under $\ell^{p}$-sum for $1<p<\infty$. 

In section~3, we start with Theorem~\ref{P32}, there we demonstrates that if $X$ is a coreflexive space such that the quotient space $X^{**}/X$ is separable, then $X$ and all of its dual spaces satisfy the Radon–Nikodým property (R.N.P.). We refer the reader to \cite[Chapter~3]{DU} for a detailed study on the R.N.P. In Theorem~\ref{T210}, we show that if $X$ is a coreflexive space such that $X^{**}/X$ is separable, then $L^{p}(\mu,X)$ is coreflexive for $1<p<\infty$. We also provide a measure theoretic approach to show the coreflexivity of the space $L^{p}(\mu,X)$ in a special case using a different set of hypothesis on $X$. Recall that a point $x\in X_{1}$ is said to be \textbf{w-PC}   if the identity map $I:(X_{1},\textnormal{w})\rightarrow(X_{1},\|.\|)$ is continuous at $x$. Let $x\in X_{1}$ is a w-PC. We say that the point $x\in X_{1}$ is w-PC in all the higher even-order dual unit balls of $X$, if for every even $n\in \mathbb{N}$, the canonical image of $x$ in to $X_{1}^{(n)}$ is a w-PC. We conclude this note by providing an alternative proof of Fact~\ref{F38}, which states that in a quasi-reflexive Banach space $X$, w-PC's of $X_{1}$ retain the same property in all the higher even-order dual unit balls of $X$.
\section{Basic results on coreflexive spaces}
For a closed subspace $Y$ of $X$, it is known (See \cite[Exercise~1]{Ho}) that $Y^{**}/Y$ can be identified with a closed subspace of $X^{**}/X$ . For the sake of completeness, we include a proof of this fact below. Specifically in the following proposition,  we denote by $J_{X}(X)$ and $J_{Y}(Y)$ to denote the canonical embeddings of $X$ in to $X^{**}$ and $Y$ into $Y^{**}$, respectively, in order to avoid ambiguity. For a closed subspace $Y$ of $X$, we use $Q_{Y}$ to denote the quotient map from $X$ onto the quotient space $X/Y$.

\begin{prop}\label{T22}
 Let $Y$ be a closed subspace of $X$. Then $(J_{X}(X)+Y^{\perp\perp})/{J_{X}(X)}$ is a closed subspace of ${X^{**}}/{J_X(X)}$ and ${Y^{**}}/{J_Y(Y)}$ is isomorphic to $(J_{X}(X)+Y^{\perp\perp})/{J_{X}(X)}$
\end{prop}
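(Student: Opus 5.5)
We use the standard identification of $Y^{**}$ with $Y^{\perp\perp}\subseteq X^{**}$. Let $i:Y\to X$ be the inclusion. Then $i^{**}:Y^{**}\to X^{**}$ is an isometric embedding with range $Y^{\perp\perp}$, and it is w$^*$-w$^*$ continuous. Indeed, $i^*:X^*\to Y^*$ is the restriction map, which is a quotient map with kernel $Y^\perp$, so $Y^*\cong X^*/Y^\perp$ isometrically and $(X^*/Y^\perp)^*=Y^{\perp\perp}$. Moreover $i^{**}\circ J_Y=J_X\circ i$, so $i^{**}(J_Y(Y))=J_X(Y)$. Hence $Y^{**}/J_Y(Y)$ is isometric to $Y^{\perp\perp}/J_X(Y)$, and it suffices to compare $Y^{\perp\perp}/J_X(Y)$ with $(J_X(X)+Y^{\perp\perp})/J_X(X)$.

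\textbf{Key identity.} We claim that
\[
J_X(X)\cap Y^{\perp\perp}=J_X(Y).
\]
The inclusion $\supseteq$ is clear. For $\subseteq$, suppose $J_X(x)\in Y^{\perp\perp}$. Then $f(x)=0$ for all $f\in Y^\perp$, and by Hahn--Banach $x\in Y$ because $Y$ is closed.

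\textbf{The map.} Define $T:Y^{\perp\perp}/J_X(Y)\to X^{**}/J_X(X)$ by $T(y^{**}+J_X(Y))=y^{**}+J_X(X)$. It is well defined and linear with $\|T\|\le1$. By the key identity it is injective, and its range is exactly $(J_X(X)+Y^{\perp\perp})/J_X(X)$. Both claims of the proposition therefore reduce to showing that $T$ is bounded below: then $T$ is an isomorphism onto its range, and the range is closed because it is isomorphic to a Banach space. Closedness of the range in $X^{**}/J_X(X)$ is equivalent to $J_X(X)+Y^{\perp\perp}$ being closed in $X^{**}$.

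\textbf{Main obstacle: the lower estimate.} We aim to prove
\[
d(y^{**},J_X(Y))\le 3\, d(y^{**},J_X(X)) \quad\text{for } y^{**}\in Y^{\perp\perp},
\]
or a similar bound with some absolute constant. Let $Q:X\to X/Y$ be the quotient map. Then $Q^{**}:X^{**}\to (X/Y)^{**}$ has kernel $Y^{\perp\perp}$, since $(X/Y)^*=Y^\perp$.

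Fix $y^{**}\in Y^{\perp\perp}$ and $x\in X$ with $\|y^{**}-J_Xx\|<d+\varepsilon$, where $d=d(y^{**},J_X(X))$. Applying $Q^{**}$ and using $Q^{**}y^{**}=0$, we get
\[
\|Qx\|=\|Q^{**}J_Xx\|\le \|J_Xx-y^{**}\|<d+\varepsilon.
\]
So there is $y\in Y$ with $\|x-y\|<d+\varepsilon$. Then
\[
\|y^{**}-J_Xy\|\le\|y^{**}-J_Xx\|+\|x-y\|<2(d+\varepsilon).
\]
This gives $d(y^{**},J_X(Y))\le 2\,d(y^{**},J_X(X))$, so $T$ is bounded below with constant $1/2$, and the proof is complete. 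The step needing care is the identification $\ker Q^{**}=Y^{\perp\perp}$ together with the isometry $\|Q^{**}J_Xx\|=\|Qx\|$; both are standard duality facts ($(X/Y)^*\cong Y^\perp$ isometrically, and $J$ commutes with $Q^{**}$).
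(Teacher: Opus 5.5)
Your proof is correct, but it organizes the argument differently from the paper.

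The paper treats the two assertions separately and qualitatively. Closedness of $J_X(X)+Y^{\perp\perp}$ comes from writing it as the preimage $(Q_Y^{**})^{-1}\bigl(J_{X/Y}(X/Y)\bigr)$ of a closed set. The isomorphism comes from showing that $Q_{J_X(X)}\circ R_Y^*$ (your $i^{**}$ followed by the quotient map) sends $Y^{**}$ onto $(J_X(X)+Y^{\perp\perp})/J_X(X)$ with kernel exactly $J_Y(Y)$. The paper computes that kernel by a net argument (w$^*$-continuity of $R_Y^*$ plus weak closedness of $Y$), where you simply invoke ${}^{\perp}(Y^{\perp})=Y$. It then passes from a bounded bijection to an isomorphism via the open mapping theorem, which it leaves implicit and which needs the closedness from the first step.

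You use the same ingredient, $Q^{**}$ with $\ker Q^{**}=Y^{\perp\perp}$ and $Q^{**}\circ J_X=J_{X/Y}\circ Q$, but quantitatively, via $\|Q^{**}J_Xx\|=\|Qx\|$. Your estimate $d(y^{**},J_X(Y))\le 2\,d(y^{**},J_X(X))$ gives injectivity, closedness of the range, and the isomorphism at once; your separate key identity also follows from it. This avoids the open mapping theorem and gives the explicit bound $\|T^{-1}\|\le 2$.

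That constant is optimal in general. For $c_0\subseteq\ell^\infty$, the class of $(1,1,1,\dots)$ has norm $1$ in $c_0^{**}/c_0$. Its image, however, lies within distance $1/2$ of $J_{\ell^\infty}\bigl(\tfrac12(1,1,1,\dots)\bigr)$. So the natural embedding is an isomorphism but not an isometry.
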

\begin{proof}
	Let $X$ be a coreflexive space and let $Q_{Y}:X\to X/{Y}$ be the quotient map. It is easy to see that $Q_{Y}^{**}\circ J_{X}=J_{X/Y}\circ Q_{Y}$. So we have, $Q_{Y}^{{**}{-1}}\circ J_{X/Y}\left(X/{Y}\right)=J_{X}(X)+Y^{\perp\perp}$. Thus $J_{X}(X)+Y^{\perp\perp}$ is a closed subspace of $X^{**}$ and hence $(J_{X}(X)+Y^{\perp\perp})/{J_{X}(X)}$ is a closed subspace of ${X^{**}}/{J_X(X)}$.

   Next we show that ${Y^{**}}/{J_Y(Y)}$ is isomorphic to $(J_{X}(X)+Y^{\perp\perp})/{J_{X}(X)}$. Let $R_{Y}:X^*\to Y^*$ be the restriction map with ker($R_{Y})=Y^{\perp}$. It is easy to see that $R_{Y}^*$ is an isometry from $Y^{**}$ onto $Y^{\perp\perp}$. Clearly, $R_{Y}^*\circ J_{Y}=J_{X}|_{Y}$. Now we consider the map $Q_{J_{X}(X)}\circ R_{Y}^*:Y^{**}\to (J_{X}(X)+Y^{\perp\perp})/{J_{X}(X)}$. First we show that ker$(Q_{J_{X}(X)}\circ R_{Y}^*)=J_{Y}(Y)$. Let $R_{Y}^*(y^{**})\in J_{X}(X)$, say $R_{Y}^*(y^{**})=J_{X}(x_{0})$ for some $x_{0}\in X$ and $\{y_{\alpha}\}_{\alpha\in I}\subseteq Y$ be a net such that $J_{Y}(y_{\alpha})\to y^{**}$ in the w$^*$-topology of $Y^{**}$. Since the adjoint of a bounded linear map is always w$^*$-continuous (See \cite[Pg.~121]{Ho}), we have that $R_{Y}^{*}$ is w$^*$-continuous. Consequently, $J_{X}(y_{\alpha})=R_{Y}^{*}(J_{Y}(y_{\alpha}))\to R_{Y}^*(y^{**})=J_{X}(x_{0})$ in the w$^*$-topology of $X^{**}$. So we get that $y_{\alpha}\to x_{0}$ in the w-topology. Thus $x_{0}\in Y$. It gives that $R_{Y}^*(y^{**})=J_{X}(x_{0})=R_{Y}^*(J_{Y}(x_{0}))$. Since $R_{Y}^*$ is an onto isometry, we have $y^{**}=J_{Y}(x_{0})$. Hence ker$(Q_{J_{X}(X)}\circ R_{Y}^*)\subseteq J_{Y}(Y)$ The other way inclusion is easy to check. Now observe that the map $Q_{J_{X}(X)}\circ R_{Y}^*$ is surjective, as $R_{Y}^*$ is surjective. We have that $Y^{**}/J_{Y}(Y)$ is isomorphic to $(J_{X}(X)+Y^{\perp\perp})/{J_{X}(X)}$.
\end{proof}

Let $Y\subseteq Z\subseteq X^*$ be two subsets. We say $Y$ is \textbf{w$^*$-dense} in $Z$, if for each point $z^*\in Z$, there exists a net $\{y_{\alpha}^*\}_{\alpha\in J}\subseteq Y$ such that $y_{\alpha}^*\to z^*$ in the w$^*$-topology. While,  the subset $Y$ is said to be \textbf{w$^*$-sequently dense} in $Z$, if for each point $z^*\in Z$, there exists a sequence $\{y_{n}^*\}_{n\in \mathbb{N}}\subseteq Y$ such that $y_{n}^*\to z^*$ in the w$^*$-topology. We denote by $\overline{Y}^{\textnormal{w}^*}$, the closure of $Y$ in the w$^*$-topology. We begin this note by recalling the \textbf{Goldstine-Weston density lemma} from \cite{Ho}.

\begin{lem}\cite[Pg.~126]{Ho}\label{L21}
	Let $X$ be a Banach space. Then $X_{1}$ is \textnormal{w}$^*$-dense in $X_{1}^{**}$.
\end{lem}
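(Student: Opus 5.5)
Goldstine's lemma (Lemma~\ref{L21}) will follow from the Hahn--Banach separation theorem applied in the locally convex space $(X^{**},\textnormal{w}^*)$, using that the dual of this space is exactly $X^*$. The plan is to show that the w$^*$-closure $K=\overline{J_X(X_1)}^{\textnormal{w}^*}$ is all of $X_1^{**}$. First, $J_X(X_1)\subseteq X_1^{**}$ since $J_X$ is an isometry. By the Banach--Alaoglu theorem, $X_1^{**}$ is w$^*$-compact and in particular w$^*$-closed. Hence $K\subseteq X_1^{**}$. Moreover, $K$ is convex, because it is the closure of a convex set in a linear topology.

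For the reverse inclusion I will argue by contradiction. Suppose some $x^{**}\in X_1^{**}\setminus K$. Since $K$ is a w$^*$-closed convex set, Hahn--Banach separation in $(X^{**},\textnormal{w}^*)$ gives a w$^*$-continuous linear functional $\varphi$ and a real $c$ with
\[
\textnormal{Re}\,\varphi(z^{**})\le c<\textnormal{Re}\,\varphi(x^{**})\quad\text{for all } z^{**}\in K.
\]
The key step, and the only nonformal point, is that every w$^*$-continuous linear functional on $X^{**}$ is evaluation at some $x^*\in X^*$, so $\varphi(z^{**})=z^{**}(x^*)$. I would invoke this standard fact: a linear functional continuous for the topology generated by the family $\{x^{**}\mapsto x^{**}(x^*)\}$ must lie in the span of finitely many such evaluations.

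Now take the supremum over $J_X(X_1)\subseteq K$. Because $X_1$ is balanced (it is invariant under multiplication by unimodular scalars), this gives
\[
\|x^*\|=\sup_{x\in X_1}\textnormal{Re}\,x^*(x)\le c .
\]
On the other hand,
\[
\textnormal{Re}\,x^{**}(x^*)\le\|x^{**}\|\,\|x^*\|\le\|x^*\|\le c ,
\]
which contradicts $c<\textnormal{Re}\,x^{**}(x^*)$. Therefore $K=X_1^{**}$. This means every $x^{**}\in X_1^{**}$ is the w$^*$-limit of a net in $J_X(X_1)$, which is exactly the assertion of Lemma~\ref{L21}.
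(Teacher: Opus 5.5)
Your proof is correct: it is the standard Hahn--Banach separation proof of Goldstine's theorem. The one non-formal input you flag, that the $\textnormal{w}^*$-continuous linear functionals on $X^{**}$ are exactly the evaluations at points of $X^*$, is indeed the right key fact, and the paper gives no argument of its own here, only citing Holmes for this classical lemma.
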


In the following lemma, we assume that $X$ is w$^*$-sequently dense in $X^{**}$. This provides a link between separable subspaces of $X^{**}$ and separable subspaces of $X$. Let $Y$ be a closed subspace of $X$. We recall that the annihilator of $Y$, denoted by $Y^{\perp}$, is defined as $\{x^*\in X^*:x^*(y)=0 \text{ for all }y\in Y\}$.
\begin{lem}\label{L23}
	Let $X$ be such that it is \textnormal{w}$^*$-sequently dense in $X^{**}$. For a closed separable subspace $Z_{0}\subseteq X^{**}$, there exists a closed separable subspace $Z_{}\subseteq X$ such that $Z_{0}\subseteq Z_{}^{\perp\perp}$.
\end{lem}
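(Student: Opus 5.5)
Since $Z_{0}$ is separable, I will fix a countable norm-dense subset $\{z_{k}^{**}\}_{k\in\mathbb{N}}\subseteq Z_{0}$. By hypothesis $X$ is w$^*$-sequently dense in $X^{**}$, so for each $k$ there is a sequence $\{x_{k,n}\}_{n\in\mathbb{N}}\subseteq X$ with $J_{X}(x_{k,n})\to z_{k}^{**}$ in the w$^*$-topology of $X^{**}$ as $n\to\infty$. I then define
\[
Z=\overline{\textnormal{span}}\{x_{k,n}:k,n\in\mathbb{N}\}\subseteq X .
\]
This is the closed linear span of a countable set, so rational linear combinations form a countable dense subset and $Z$ is a closed separable subspace of $X$.

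Next I will show $z_{k}^{**}\in Z^{\perp\perp}$ for every $k$. Take $x^{*}\in Z^{\perp}$. Then $x^{*}(x_{k,n})=0$ for all $n$, and w$^*$-convergence gives
\[
z_{k}^{**}(x^{*})=\lim_{n\to\infty}x^{*}(x_{k,n})=0 .
\]
So $z_{k}^{**}$ annihilates $Z^{\perp}$, which means $z_{k}^{**}\in Z^{\perp\perp}$. Equivalently, $J_{X}(Z)\subseteq Z^{\perp\perp}$, and $Z^{\perp\perp}$ is w$^*$-closed, so it contains the w$^*$-limits $z_{k}^{**}$.

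To finish, I note that $Z^{\perp\perp}$ is norm closed, since it is w$^*$-closed. It contains the norm-dense subset $\{z_{k}^{**}\}$ of $Z_{0}$, hence $Z_{0}\subseteq Z^{\perp\perp}$.

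Every step here is routine. The only points needing care are the reduction to a countable dense set, which is what keeps $Z$ separable, and the passage from sequential w$^*$-limits to membership in the w$^*$-closed set $Z^{\perp\perp}$. The sequential density hypothesis is exactly what makes countably many generators suffice. With only nets from Lemma~\ref{L21}, the generating set could be uncountable and $Z$ need not be separable, so that is where the hypothesis is essential.
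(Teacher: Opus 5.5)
Your proof is correct and follows the paper's argument: you take a countable norm-dense subset of $Z_0$, approximate each of its elements w$^*$ by a sequence from $X$, and let $Z$ be the closed span of these countably many vectors. You also spell out two steps the paper leaves implicit: the direct check that each $z_k^{**}$ annihilates $Z^{\perp}$ (the paper instead cites $Z^{\perp\perp}=\overline{Z}^{\textnormal{w}^*}$), and the passage from the dense subset to all of $Z_0$ using that $Z^{\perp\perp}$ is norm closed.
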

	\begin{proof}
    Let $Z_{0}\subseteq X^{**}$ be a separable subspace and let $\{x_{n}^{**}:n\in\mathbb{N}\}\subseteq Z_{0}$ be a countable dense susbset. Fix $n\in\mathbb{N}$, we have a sequence $(x_{n}(k))_{k\geq 1}\subseteq X$ such that $x_{n}(k)\to x_{n}^{**}$ as $k\to \infty$ in the w$^*$-topology. Let $Z_{}=\overline{\text{span}}\{x_{n}(k):n, k\in \mathbb{N}\}$ (norm closure). Clearly, $Z$ is a closed separable subspace of $X$. Since $Z^{\perp\perp}=\overline{Z}^{\textnormal{w}^*}$ (see \cite[Pg.~124]{Ho}), we have that $Z_{0}\subseteq Z_{}^{\perp\perp}$. This completes the proof.
\end{proof}

\begin{rem}
	Note that the assumption that $X$ is \textnormal{w}$^*$-sequently dense in $X^{**}$ is always satisfied when $X^*$ is separable. Indeed, the closed unit ball $X_{1}^{**}$ being a \textnormal{w}$^*$-compact subset of $X^{**}$ is metrizable in the \textnormal{w}$^*$-topology when $X^*$ is separable (See \cite[Theorem~3.16]{RW}).
\end{rem}

As we have discussed in the introduction that reflexivity of a space is separably determined property, in the below, we will discuss a similar characterization of coreflexive spaces. We use Lemma~\ref{L23} to establish the next theorem. Throughout, we use $Q_{X}$ to denote the quotient map from $X^{**}$ onto the quotient space $X^{**}/X$, where $X\subseteq X^{**}$ is identified with its canonical image in $X^{**}$.
\begin{thm}\label{T25}
    Let $X$ be such that it is \textnormal{w}$^*$-sequently dense in $X^{**}$. If every closed separable subspace $Y\subseteq X$ is coreflexive, then $X$ is a coreflexive space.
\end{thm}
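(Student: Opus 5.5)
Our approach is to reduce reflexivity of $X^{**}/X$ to reflexivity of its separable subspaces, using the Eberlein–Šmul'yan characterization recalled in the introduction. We will show that every closed separable subspace of $X^{**}/X$ is reflexive.

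Let $W\subseteq X^{**}/X$ be a closed separable subspace. Choose a countable dense set $\{w_n\}\subseteq W$ and lifts $x_n^{**}\in X^{**}$ with $Q_X(x_n^{**})=w_n$. Let $Z_0=\overline{\textnormal{span}}\{x_n^{**}:n\in\mathbb{N}\}$; this is a closed separable subspace of $X^{**}$. Since $X$ is w$^*$-sequently dense in $X^{**}$, Lemma~\ref{L23} gives a closed separable subspace $Z\subseteq X$ with $Z_0\subseteq Z^{\perp\perp}$.

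By hypothesis $Z$ is coreflexive, so $Z^{**}/Z$ is reflexive. By Proposition~\ref{T22}, $Z^{**}/Z$ is isomorphic to $(X+Z^{\perp\perp})/X$, which is a closed subspace of $X^{**}/X$. Hence $(X+Z^{\perp\perp})/X$ is reflexive. Each $w_n=Q_X(x_n^{**})$ lies in $Q_X(Z^{\perp\perp})=(X+Z^{\perp\perp})/X$. Since this subspace is closed, it contains the closure of $\textnormal{span}\{w_n\}$, which is $W$. Thus $W$ is a closed subspace of a reflexive space and is therefore reflexive.

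Every closed separable subspace of $X^{**}/X$ is reflexive, so by Eberlein–Šmul'yan $X^{**}/X$ is reflexive, i.e. $X$ is coreflexive. There is one caveat. Coreflexivity as defined in the paper also requires $X$ to be non-reflexive. If that is intended, we either read the theorem as the statement that $X^{**}/X$ is reflexive, or we note that the definition is applied uniformly to $X$ and to its subspaces.

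The main obstacle is the middle step: correctly identifying $(X+Z^{\perp\perp})/X$ as a closed, reflexive subspace of $X^{**}/X$ that contains the lifts of $W$. This is exactly what Lemma~\ref{L23} together with Proposition~\ref{T22} provides. The remaining step is the separable-determination of reflexivity, which we cite from \cite{Ho}.
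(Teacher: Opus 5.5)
Your proof is correct and is essentially the paper's argument: in both, Lemma~\ref{L23} and Proposition~\ref{T22} place the relevant countable data of $X^{**}/X$ inside the closed reflexive subspace $(X+Z^{\perp\perp})/X$; you apply this to a closed separable subspace $W$ and use the separable-determination form of the Eberlein--\v{S}mul'yan theorem, whereas the paper applies it to a bounded sequence and argues by contradiction, which is only a cosmetic difference. Your non-reflexivity caveat is unnecessary, since the definition in the introduction only requires $X^{**}/X$ to be reflexive (and demanding non-reflexivity would make the hypothesis unsatisfiable, as finite-dimensional subspaces are reflexive).
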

\begin{proof}
   Suppose every closed separable subspace $Y\subseteq X$ is coreflexive. By the virtue of the \textbf{Eberlein–Šmul'yan} theorem (See \cite[Pg.~149]{Ho}), it is enough to show that every bounded sequence in the quotient space $X^{**}/X$ has a weakly convergent subsequence. Suppose that there exists a bounded sequence $(Q_{X}(x_{n}^{**}))_{n\in \mathbb{N}}\subseteq X^{**}/X$ with no weakly convergent subsequence. Let $Z_{0}= \overline{span}\{x_{n}^{**}:n\in \mathbb{N}\}$ (norm closure), a separable subspace of $X^{**}$. By the hypothesis and Lemma~\ref{L23}, we have a separable subspace $Z_{}\subseteq X$ such that $Z_{0}\subseteq Z^{\perp\perp}=\overline{Z}^{\textnormal{w}^*}$. From proposition~\ref{T22}, we have that $(Z^{\perp\perp}+X)\subseteq X^{**}$ is a closed subspace and $(Z^{\perp\perp}+X)/X\cong Z^{**}/Z$. Since $Z\subseteq X$ is separable, we have that the quotient space $(Z^{\perp\perp}+X)/X$ is reflexive. Note that $(Q_{X}(x_{n}^{**}))_{n\in \mathbb{N}}\subseteq (Z^{\perp\perp}+X)/X$. Since $(Q_{X}(x_{n}^{**}))_{n\in \mathbb{N}}$ has no weakly convergent subsequence in $X^{**}/X$, we have that $(Q_{X}(x_{n}^{**}))_{n\in \mathbb{N}}$ has no weakly convergent subsequence in $(Z^{\perp\perp}+X)/X$ as it is a closed subspace of $X^{**}/X$ . This is a contradiction to the fact that $(Z^{\perp\perp}+X)/X$ is reflexive. Hence the quotient space $X^{**}/X$ is reflexive, therefore $X$ is coreflexive. This completes the proof.
\end{proof}
Let us recall the canonical decomposition of the third dual $X^{***}$. Consider the canonical embedding $J_{X}:X\to X^{**}$ and its adjoint $J_{X}^*:X^{***}\to X^*$. Clearly, ker$(J_{X}^*)=X^{\perp}\subseteq X^{***}$. It is esay to see that the map $P=J_{X^*}\circ J^{*}_{X}:X^{***}\to X^{***}$ is a projection such that ker$(P)=X^{\perp}$ and range$(P)=X^*$. This gives that $X^{***}=X^*\oplus X^{\perp}$. We use this fact to establish the next proposition. Which connects the coreflexivity of a space $X$ to the reflexivity of a subspace of $X^{***}$.
\begin{prop}\label{P27}
	Let $X$ be a Banach space. Then $X$ is coreflexive if and only if $X^{\perp}\subseteq X^{***}$ is reflexive.
\end{prop}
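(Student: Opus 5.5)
The approach is to identify $X^{\perp}\subseteq X^{***}$ isometrically with the dual of the quotient $X^{**}/X$, and then use that a Banach space is reflexive if and only if its dual is reflexive.

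\emph{Step 1.} For any closed subspace $M$ of a Banach space $E$, the annihilator $M^{\perp}\subseteq E^*$ is isometrically isomorphic to $(E/M)^*$. The isometry is $\varphi\mapsto\varphi\circ Q_M$, where $Q_M:E\to E/M$ is the quotient map. Apply this with $E=X^{**}$ and $M=J_X(X)$; note that $M$ is closed because $J_X$ is an isometry. This gives
\[
X^{\perp}=\{x^{***}\in X^{***}: x^{***}|_{J_X(X)}=0\}\cong (X^{**}/X)^{*}.
\]
Here $X^{\perp}$ is exactly $\ker(J_X^*)$, the complement of $X^*$ in the decomposition $X^{***}=X^*\oplus X^{\perp}$ recalled above. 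For this step I only need that $X^{\perp}$ is a closed subspace of $X^{***}$, together with the isometric identification.

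\emph{Step 2.} I will invoke the standard fact that a Banach space $W$ is reflexive if and only if $W^*$ is reflexive. One direction is that the dual of a reflexive space is reflexive. For the converse, if $W^*$ is reflexive then $W^{**}$ is reflexive, and $J_W(W)$ is a closed subspace of $W^{**}$. Closed subspaces of reflexive spaces are reflexive, so $W$ is reflexive. Applying this with $W=X^{**}/X$: the space $X$ is coreflexive if and only if $X^{**}/X$ is reflexive, if and only if $(X^{**}/X)^*$ is reflexive, if and only if $X^{\perp}$ is reflexive. Reflexivity passes across the isometric isomorphism of Step 1.

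I expect no serious obstacle. The only point needing care is checking that $\varphi\mapsto\varphi\circ Q_M$ is an onto isometry. Linearity and injectivity are clear. Surjectivity holds because any functional on $E$ vanishing on $M$ factors through $Q_M$. The norm equality follows from the fact that $Q_M$ maps the open unit ball of $E$ onto the open unit ball of $E/M$.
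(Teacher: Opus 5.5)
Your proof is correct, but it takes a different route from the paper. The paper uses the decomposition $X^{***}=X^*\oplus X^{\perp}$, coming from the projection $J_{X^*}\circ J_X^*$, to identify $X^{\perp}$ with the quotient $X^{***}/X^*$. It then cites Clark's theorem \cite[Theorem~2.1]{CJ} that $X$ is coreflexive if and only if $X^*$ is, i.e.\ $X^{**}/X$ is reflexive if and only if $X^{***}/X^*$ is.

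You instead identify $X^{\perp}$ isometrically with $(X^{**}/X)^*$ through the annihilator--quotient duality, and then use that a space is reflexive exactly when its dual is. Your argument is more elementary and self-contained. It needs neither the complemented decomposition of $X^{***}$ nor the external citation, and it gives an isometric rather than merely isomorphic identification. Combined with the paper's decomposition, it even reproves Clark's theorem, since it gives $X^{***}/X^*\cong X^{\perp}\cong (X^{**}/X)^*$.

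The paper's route phrases the criterion through $X^{***}/X^*$, which is the form it needs later. In the proof of Theorem~\ref{P32}, separability and reflexivity of $X^{***}/X^*$ are required to apply Valdivia's decomposition to $X^*$. That same proof also uses your identification $(X^{**}/X)^*\cong X^{\perp}$.
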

\begin{proof}
	From the above discussion, we have the decomposition of $X^{***}$ as $X^{***}=X^*\oplus X^{\perp}$. This implies that the quotient space $X^{***}/X^*$ is isomorphic to $X^{\perp}$. It is shown (See \cite[Theorem~2.1]{CJ}) that a space $X$ is coreflexive if and only if $X^*$ is coreflexive, that is, $X^{**}/X$ is reflexive if and only if $X^{***}/X^*$ is reflexive. So we get that $X$ is coreflexive if and only if $X^{\perp}$ is a reflexive subspace of $X^{***}$.
\end{proof}
The next theorem establishes that the coreflexivity is stable under $\ell^{p}$-sum for each $1<p<\infty$. For $n\in \mathbb{N}$, we denote by $xe_{n}$, a sequence of elements in $X$ in which $n^{th}$-coordinate is $x$ and $0$ otherwise.

\begin{thm}\label{T27}
Let $\{X_{k}\}_{k\in \mathbb{N}}$ be a family of spaces such that each $X_{k}$ is coreflexive. Then for any $1< p<\infty$, the space $\bigoplus_{\ell^{p}}X_{k}$ is coreflexive.
\end{thm}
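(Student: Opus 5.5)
The plan is to compute the bidual of $X=\bigoplus_{\ell^{p}}X_{k}$ explicitly and identify the quotient $X^{**}/X$ with an $\ell^{p}$-sum of the quotients $X_{k}^{**}/X_{k}$. Let $q$ be the conjugate exponent. Since $1<p<\infty$, the standard duality for $\ell^{p}$-sums gives isometric identifications
\[
X^{*}\cong \bigoplus_{\ell^{q}}X_{k}^{*},\qquad X^{**}\cong \bigoplus_{\ell^{p}}X_{k}^{**},
\]
where the pairing is coordinatewise summation. I would verify that under these identifications the canonical embedding $J_{X}$ acts coordinatewise, $J_{X}((x_{k}))=(J_{X_{k}}(x_{k}))$. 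This is a direct check on the pairing with elements of $\bigoplus_{\ell^{q}}X_{k}^{*}$.

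Next, consider the map
\[
T:\bigoplus_{\ell^{p}}X_{k}^{**}\to \bigoplus_{\ell^{p}}\bigl(X_{k}^{**}/X_{k}\bigr),\qquad T((x_{k}^{**}))=(Q_{X_{k}}(x_{k}^{**})).
\]
Because $\|Q_{X_{k}}(x_{k}^{**})\|\le\|x_{k}^{**}\|$, the map $T$ is well defined, bounded, and of norm at most one. Clearly $X\subseteq\ker T$. For the converse, suppose $T((x_{k}^{**}))=0$. Then each $x_{k}^{**}=J_{X_{k}}(x_{k})$ with $\|x_{k}\|=\|x_{k}^{**}\|$, so $(x_{k})\in X$. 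Hence $\ker T=J_{X}(X)$.

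The step needing most care is surjectivity, where we must control norms. Given $(\xi_{k})$ with $\sum\|\xi_{k}\|^{p}<\infty$, choose lifts $x_{k}^{**}$ with $Q_{X_{k}}(x_{k}^{**})=\xi_{k}$ and $\|x_{k}^{**}\|\le 2\|\xi_{k}\|$, taking $x_{k}^{**}=0$ when $\xi_{k}=0$. Then $(x_{k}^{**})\in\bigoplus_{\ell^{p}}X_{k}^{**}$. Consequently $T$ induces a continuous bijection
\[
X^{**}/X\to \bigoplus_{\ell^{p}}\bigl(X_{k}^{**}/X_{k}\bigr),
\]
which is an isomorphism by the open mapping theorem. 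The same lifting argument in fact shows it is an isometry.

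Finally, each $X_{k}^{**}/X_{k}$ is reflexive by hypothesis, and an $\ell^{p}$-sum of reflexive spaces with $1<p<\infty$ is reflexive. This follows by applying the duality formula twice: $\bigl(\bigoplus_{\ell^{p}}Y_{k}\bigr)^{**}\cong\bigoplus_{\ell^{p}}Y_{k}^{**}$, and the canonical map is coordinatewise, hence onto. Since reflexivity is preserved under isomorphism, $X^{**}/X$ is reflexive, so $X$ is coreflexive.

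The main obstacle is the bookkeeping in the bidual identification. One must confirm that the canonical embedding corresponds to the coordinatewise embedding, so that the kernel of $T$ is exactly $X$. This is where $p\neq1,\infty$ is used: for those exponents the dual of the sum is not the sum of the duals.
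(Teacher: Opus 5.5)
Your proof is correct, but it takes a different route from the paper. The paper never handles the quotient $X^{**}/X$ directly. It invokes Proposition~\ref{P27}: $X$ is coreflexive iff $X^{\perp}\subseteq X^{***}$ is reflexive, which the paper proves via $X^{***}=X^{*}\oplus X^{\perp}$ and Clark's theorem that $X$ is coreflexive iff $X^{*}$ is. It then identifies the third dual of the sum with $\bigoplus_{\ell^{q}}X_{k}^{***}$ and tests an annihilating element against the vectors $xe_{k_0}$. This gives the one-sided inclusion $\bigl(\bigoplus_{\ell^{p}}X_{k}\bigr)^{\perp}\subseteq\bigoplus_{\ell^{q}}X_{k}^{\perp}$, and reflexivity passes to the closed subspace.

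Your argument is essentially the predual of this. Since $(X^{**}/X)^{*}\cong X^{\perp}$ and $\bigl(\bigoplus_{\ell^{p}}(X_{k}^{**}/X_{k})\bigr)^{*}\cong\bigoplus_{\ell^{q}}X_{k}^{\perp}$, the adjoint of your isometry $X^{**}/X\cong\bigoplus_{\ell^{p}}(X_{k}^{**}/X_{k})$ is exactly that inclusion, now shown to be an equality.

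The paper's route only needs a one-line inclusion check, so it avoids the lifting with norm control and the open mapping theorem. Yours buys independence from Proposition~\ref{P27} and Clark's theorem, needs only two levels of duality instead of three, and gives a sharper two-sided conclusion: an isometric description of the quotient rather than an embedding of its dual.

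One small quibble concerns your closing remark, which is imprecise for $p=1$. There $\bigl(\bigoplus_{\ell^{1}}X_{k}\bigr)^{*}=\bigoplus_{\ell^{\infty}}X_{k}^{*}$ does hold. What fails is the identification of the bidual, and $p>1$ is also what you need for the final $\ell^{p}$-sum to be reflexive.
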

\begin{proof}
    From Proposition~\ref{P27}, we have that a space $X$ is coreflexive if and only if the closed subspace $X^{\perp}\subseteq X^{***}$ is reflexive. So it is enough to show that $\left(\bigoplus_{\ell^{p}}X_{k}\right)^{\perp}\subseteq \bigoplus_{\ell^{q}}X_{k}^{***}$ is reflexive. We claim that $\left(\bigoplus_{\ell^{p}}X_{k}\right)^{\perp}\subseteq \bigoplus_{\ell^{q}}X_{k}^{\perp}$. Let $(x_{k}^{***})_{k\in\mathbb{N}}\in \left(\bigoplus_{\ell^{p}}X_{k}\right)^{\perp}$. Fix $k_{0}\in \mathbb{N}$ and $x\in X_{k_{0}}$, we have $(xe_{k_{0}})\in \bigoplus_{\ell^{p}}X_{k}$. So we get $(x_{k}^{***})(xe_{k_{0}})=x_{k_{0}}^{***}(x)=0$. Since $x\in X_{k_{0}}$ is an arbitrary choice, we have $x_{k_{0}}^{***}\in X_{k_{0}}^{\perp}\subseteq X_{k_{0}}^{***}$. This implies $(x_{k}^{***})_{k\in \mathbb{N}}\in \bigoplus_{\ell^{q}}X_{k}^{\perp}$, as each component $x_{k}^{***}$ belongs to the corresponding subspace $X_{k}^{\perp}$. This establishes the claim. By the hypothesis, each $X_{k}^{\perp}$ is reflexive. So we have that $\bigoplus_{\ell^{q}}X_{k}^{\perp}$ is a reflexive space. Consequently, the subspace $\left(\bigoplus_{\ell^{p}}X_{k}\right)^{\perp}\subseteq \bigoplus_{\ell^{q}}X_{k}^{\perp}$ is reflexive. Hence $\bigoplus_{\ell^{p}}X_{k}$ is a coreflexive space. This completes the proof.
\end{proof}

\begin{rem}
    In particular, if we have a family of quasi-reflexive spaces, say $\{X_{k}\}_{k\in\mathbb{N}}$, then it follows from Theorem~\ref{T27} that the space $\bigoplus_{\ell^{p}}X_{k}$ is coreflexive for all $1< p<\infty$.
\end{rem}
\begin{rem}
	The proof given above does not rely on the countability of the indexing set. Hence, the conclusion of Theorem~\ref{T27} remains valid for an arbitrary family of Banach spaces.
\end{rem}
The following remark deals with the special cases when $p = 1$ or $p = \infty$.
\begin{rem}
	When the indexing set is infinite, the spaces $\bigoplus_{\ell^{1}} X_{k}$ and $\bigoplus_{\ell^{\infty}} X_{k}$ contain isometric copies of $\ell^{1}$ and $\ell^{\infty}$, respectively. It is known (see\cite[Theorem~2.3]{CJ}) that both $\ell^{1}$ and $\ell^{\infty}$ are not coreflexive. Hence, neither of $\bigoplus_{\ell^{1}} X_{k}$ and $\bigoplus_{\ell^{\infty}} X_{k}$ can be coreflexive.
\end{rem}
In the next proposition, we will show that the property that a space $X$ is w$^*$-sequently dense in to its bidual $X^{**}$, is stable under $\ell^{p}$-sum for $1<p<\infty$. 

\begin{prop}
	Let $\{X_{k}\}_{k\in \mathbb{N}}$ be a family of Banach spaces and let each $X_{k}$ is \textnormal{w$^*$}-sequently dense in $X_{k}^{**}$. Then for any $1< p<\infty$, the space $\bigoplus_{\ell^{p}}X_{k}$ is \textnormal{w}$^*$-sequently dense in $\bigoplus_{\ell^{p}}X_{k}^{**}$.
\end{prop}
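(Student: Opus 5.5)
We identify $\left(\bigoplus_{\ell^{p}}X_{k}\right)^{**}$ with $\bigoplus_{\ell^{p}}X_{k}^{**}$ through the dualities $\left(\bigoplus_{\ell^{p}}X_{k}\right)^{*}=\bigoplus_{\ell^{q}}X_{k}^{*}$ and $\left(\bigoplus_{\ell^{q}}X_{k}^{*}\right)^{*}=\bigoplus_{\ell^{p}}X_{k}^{**}$, where $1/p+1/q=1$. Both identifications hold for $1<p<\infty$. Under them the w$^*$-topology on the bidual is the one induced by $\bigoplus_{\ell^{q}}X_{k}^{*}$. So, given $x^{**}=(x_{k}^{**})\in\bigoplus_{\ell^{p}}X_{k}^{**}$, we must produce a sequence $(z_{n})$ in $\bigoplus_{\ell^{p}}X_{k}$ with $\sum_{k}x_{k}^{*}(z_{n}(k))\to\sum_{k}x_{k}^{**}(x_{k}^{*})$ for every $(x_{k}^{*})\in\bigoplus_{\ell^{q}}X_{k}^{*}$.

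\textbf{Step 1 (coordinate sequences).} For each $k$, the hypothesis gives a sequence $(x_{k}(n))_{n}\subseteq X_{k}$ with $x_{k}(n)\to x_{k}^{**}$ in the w$^*$-topology of $X_{k}^{**}$. By the uniform boundedness principle each such sequence is bounded. We need a bound comparable to $\|x_{k}^{**}\|$, so we will arrange $\|x_{k}(n)\|\le 2\|x_{k}^{**}\|+2^{-k}$. I do not see how to get this from sequential density alone, and I expect it to be the main obstacle.

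\textbf{Step 2 (obtaining norm control).} My first attempt is to replace $x_{k}(n)$ by suitable convex combinations of its tail, or to truncate it. Mazur-type arguments do not directly give norm bounds in the w$^*$ sense, however. A cleaner route is to read the hypothesis in the (standard) bounded form: elements of $(X_{k}^{**})_1$ are w$^*$-limits of sequences from a fixed multiple of $(X_k)_1$. Equivalently, we note that the hypothesis applied to $x_k^{**}/\|x_k^{**}\|$ yields bounded sequences, and we assume, or verify via the closed-ball version, that the bound can be taken proportional to $\|x_{k}^{**}\|$, say at most $C\|x_{k}^{**}\|$ with a constant $C$ independent of $k$. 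If uniform control in $k$ fails, we will handle only finitely many coordinates at a time, as Step 3 does, so that only the norms of the finitely many sequences actually used matter.

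\textbf{Step 3 (diagonal truncation).} We define $z_{n}=(x_{1}(n),\dots,x_{n}(n),0,0,\dots)\in\bigoplus_{\ell^{p}}X_{k}$. Fix $(x_{k}^{*})\in\bigoplus_{\ell^{q}}X_{k}^{*}$ and $\varepsilon>0$, and choose $N$ with $\left(\sum_{k>N}\|x_{k}^{*}\|^{q}\right)^{1/q}<\varepsilon$ and $\left(\sum_{k>N}\|x_{k}^{**}\|^{p}\right)^{1/p}<\varepsilon$. We split the difference $\left|\sum_{k}x_{k}^{*}(z_{n}(k))-\sum_{k}x_{k}^{**}(x_{k}^{*})\right|$ into two parts.
\begin{itemize}
\item The first $N$ coordinates contribute an error that tends to $0$ as $n\to\infty$, by coordinatewise w$^*$-convergence.
\item The tail $k>N$ is bounded via H\"older by $\varepsilon\left[\left(\sum_{k>N}\|x_{k}(n)\|^{p}\right)^{1/p}+\varepsilon\right]$.
\end{itemize}
With the bound $\|x_{k}(n)\|\le C\|x_{k}^{**}\|+2^{-k}$ from Step 2, the tail is at most $\varepsilon\left(C\varepsilon+2^{-N}+\varepsilon\right)$, uniformly in $n$. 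Letting $n\to\infty$ and then $\varepsilon\to0$ gives $z_{n}\to x^{**}$ in the w$^*$-topology, which proves the proposition.

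The crux is therefore Step 2, the uniform $\ell^{p}$-control of the approximating sequences. Everything else is a routine H\"older tail estimate combined with diagonalisation.
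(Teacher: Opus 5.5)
Your Step 3 is the same diagonal construction the paper uses, $z_n=(x_1(n),\dots,x_n(n),0,\dots)$. The paper passes directly from coordinatewise w$^*$-convergence to w$^*$-convergence in $\bigoplus_{\ell^{p}}X_{k}^{**}$. You are right that this requires $\sup_n\|z_n\|_p<\infty$, since a w$^*$-convergent sequence is bounded. An arbitrary choice of coordinate sequences really can fail. Take $X_k=\mathbb{R}$, $x_k^{**}=0$, and $x_k(n)=k$ if $n=k$, $x_k(n)=0$ otherwise: every coordinate sequence is eventually $0$, yet $z_n=ne_n$. The same example shows your fallback (``handle only finitely many coordinates at a time'') does not help. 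The vector $z_n$ uses the $n$-th terms of $n$ different sequences, so the tail estimate needs bounds that are uniform in $k$ and comparable to $\|x_k^{**}\|$. Your Step 2 never establishes such bounds; it assumes a ``bounded form'' of the hypothesis, which is not part of the statement and which you do not derive. As written, the proof therefore has a genuine gap exactly at the point you call the crux. The paper's argument has the same hole.

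The missing idea is that the Mazur-type approach you discarded does work once it is combined with Hahn--Banach separation, and it even gives $C=1$. Let $(y_j)\subseteq X_k$ converge w$^*$ to $x_k^{**}$. Put $r=\|x_k^{**}\|$, $B_r=\{x\in X_k:\|x\|\le r\}$ and $C_n=\mathrm{conv}\{y_j:j\ge n\}$. Suppose $d=\mathrm{dist}(C_n,B_r)>0$. Separating the convex set $C_n-B_r$ from the open ball of radius $d$ yields $x^*\in S(X_k^*)$ with $x^*(c)\ge r+d$ for all $c\in C_n$. In particular $x^*(y_j)\ge r+d$ for $j\ge n$, and letting $j\to\infty$ gives $x_k^{**}(x^*)\ge\|x_k^{**}\|+d$, which is impossible. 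Hence you may pick $c_n\in C_n$ and $b_n\in B_r$ with $\|c_n-b_n\|<1/n$. Since $|x^*(c_n)-x_k^{**}(x^*)|\le\sup_{j\ge n}|x^*(y_j)-x_k^{**}(x^*)|$, it follows that $b_n\to x_k^{**}$ in the w$^*$-topology, with $\|b_n\|\le\|x_k^{**}\|$. Use these sequences as the coordinate sequences. Then $\|z_n\|_p\le\|(x_k^{**})\|_p$, and your H\"older tail estimate finishes the argument, with no $2^{-k}$ slack needed. This also fills the corresponding hole in the paper's proof.
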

\begin{proof}
	Let $(x_{k}^{**})\in \bigoplus_{\ell^{p}}X_{k}^{**}$. By the hypothesis, for each $k\in \mathbb{N}$, there exists a sequence $(x_{n}(k))_{n\in \mathbb{N}}\subseteq X_{k}$ such that $x_{n}(k)\to x_{k}^{**}$ as $n\to \infty$, in the \textnormal{w}$^*$-topology. Fix $n\in \mathbb{N}$ and define\\
\[
z_{k}(n) =
\begin{cases} 
    x_{n}(k), & \text{if } 1\leq k \leq n, \\
    0, & \text{if } k > n.
\end{cases}
\]
 We write $z(n)=(z_{k}(n))_{k\geq1}$. Thus we get a sequence $(z(n))_{n\in \mathbb{N}}\subseteq \bigoplus_{\ell^{p}}X_{k}$  such that for each $k\in\mathbb{N}$, $z_{k}(n)\to x_{k}^{**}$ as $n\to \infty$ in the w$^*$-topology. So we get that the sequence $(z(n))_{n\in \mathbb{N}}$ converges to $(x_{k}^{**})$ in the \textnormal{w}$^*$-topology. This completes the proof.
\end{proof}
\section{coreflexivity of Bochner $p$-integrable space}
This section is devoted to explore the coreflexivity of the space $L^{p}(\mu, X)$ for $1<p<\infty$. Let $X$ be a Banach space such that both $X^{*}$ and $X^{**}$ have the R.N.P. Recall that for $1 < p < \infty$, the dual space is given by $L^{p}(\mu, X)^{*} = L^{q}(\mu, X^{*})$, where $1 < q < \infty$ is such that $\tfrac{1}{p} + \tfrac{1}{q} = 1$ (see \cite[Pg.~98, Theorem~1]{DU}).
 Since $X^{**}$ also has the R.N.P., it follows that $$L^{p}(\mu, X)^{**}=L^{q}(\mu, X^{*})^*=L^{p}(\mu, X^{**}).$$ Therefore, in this setting, the natural inclusion $L^{p}(\mu, X)\subseteq L^{p}(\mu, X^{**})$ is precisely the canonical embedding of $L^{p}(\mu, X)$ into its bidual. In the above, the R.N.P. assumptions on $X^*$ and $X^{**}$ can be achieved by considering $X$, a coreflexive space, and $X^{**}/X$, a separable space. The idea of assuming that $X^{**}/X$ is a separable space comes from the work of M. Valdivia in \cite{VM}. There it is shown that a space $X$ with $X^{**}/X$ be a separable space can be written as $R\oplus S$ (topological direct sum), where $R\subseteq X$ is reflexive and $S\subseteq X$ is separable. We first utilise this fact to establish the following proposition.
\begin{thm}\label{P32}
	Let $X$ be a coreflexive space such that $X^{**}/X$ is separable. Then $X$ and all of its dual spaces satisfy the R.N.P. 
\end{thm}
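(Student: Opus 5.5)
We plan to use Valdivia's decomposition quoted just before the statement, together with two standard facts: the R.N.P. passes to closed subspaces and finite direct sums, separable dual spaces have the R.N.P. (Dunford–Pettis), and reflexive spaces have the R.N.P. Since $X^{**}/X$ is separable, Valdivia's result gives $X=R\oplus S$ (topological direct sum) with $R$ reflexive and $S$ separable. Then $X^{(n)}\cong R^{(n)}\oplus S^{(n)}$ for every $n$, and $R^{(n)}$ is reflexive. It therefore suffices to prove that every $S^{(n)}$ has the R.N.P.

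The key intermediate claim is that $S^{*}$ is separable. Since $S$ is a closed subspace of $X$, Proposition~\ref{T22} gives that $S^{**}/S$ is isomorphic to a closed subspace of $X^{**}/X$, so it is separable and reflexive (the latter also by coreflexivity of $X$). Because $S$ and $S^{**}/S$ are both separable, $S^{**}$ is separable: a closed subspace and the corresponding quotient being separable forces the whole space to be separable. Hence $S^{*}$ is separable. The same argument applied one level up shows $S^{(n)}$ is separable for all $n$. Indeed, $S^{***}=S^{*}\oplus S^{\perp}$ by the decomposition before Proposition~\ref{P27}, and $S^{\perp}\cong (S^{**}/S)^{*}$ is the dual of a separable reflexive space, hence separable. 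So $S^{***}$ is separable, and by induction (using that $S^{*}$ is coreflexive by \cite[Theorem~2.1]{CJ}, with $S^{***}/S^{*}\cong S^{\perp}$ separable) every $S^{(n)}$ is separable.

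For $n\ge 1$, $S^{(n)}$ is then a separable dual space, hence has the R.N.P. by Dunford–Pettis. For $n=0$, $S$ embeds in the separable dual $S^{**}$, so it has the R.N.P. as a subspace. Combining with the reflexive summand gives the R.N.P. for $X$ and all $X^{(n)}$.

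The main obstacle is the inductive separability step for higher duals. We must check that coreflexivity together with separability of $S^{**}/S$ propagates to $S^{(2k)}/S^{(2k-2)}$. We will do this using the identification $S^{(n+2)}/S^{(n)}\cong (S^{(n+1)}/S^{(n-1)})^{*}$ (iterating Proposition~\ref{P27}) and the fact that duals of separable reflexive spaces are separable reflexive. If this bookkeeping becomes awkward, a fallback is to note that $X^{(n)}\cong X^{*}\oplus (\text{reflexive})$ or $X\oplus(\text{reflexive})$ for $n\ge 1$ by coreflexivity. That reduces everything to $X$ and $X^{*}$, handled as above.
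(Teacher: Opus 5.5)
Your argument is correct, but it is organized differently from the paper's proof.

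\textbf{The paper's route.} It first uses that $X^{**}/X$ is separable and reflexive to get that $X^{***}/X^{*}\cong X^{\perp}\cong (X^{**}/X)^{*}$ is separable. It then applies Valdivia's theorem to $X^{*}$, not to $X$. Writing $X^{*}=R\oplus S$, it notes that the reflexive summand $R$ is w$^*$-closed, so $R=Y^{\perp}$ for some closed $Y\subseteq X$. Hence $S\cong X^{*}/Y^{\perp}\cong Y^{*}$ is a separable dual. The R.N.P. of $X^{*}$ is then obtained from the three-space theorem for the R.N.P. The same reasoning is repeated for $X^{**}$ and, ``inductively'', for the higher duals. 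Finally, $X$ is handled as a subspace of $X^{**}$.

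\textbf{Your route.} You apply Valdivia's theorem once, to $X$ itself, and transport $X=R\oplus S$ to $X^{(n)}\cong R^{(n)}\oplus S^{(n)}$. You then prove by an explicit induction that every $S^{(n)}$ is separable. The induction uses the three-space property of separability, together with $S^{***}=S^{*}\oplus S^{\perp}$ and $S^{\perp}\cong (S^{**}/S)^{*}$.

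\textbf{What each approach buys.} Your route needs only one application of Valdivia's theorem and no Krein--\v{S}mulian/annihilator step. It also replaces the nontrivial three-space theorem for the R.N.P. with the elementary stability of the R.N.P. under finite direct sums. It gives a sharper structural conclusion: the separable summand of $X$ has separable duals of every order. Your induction also makes precise what the paper leaves as ``an inductive argument''. The paper's route has a different advantage: it never needs to know that the separable summand of $X$ has separable dual, because at each level it identifies the separable piece directly as a dual space.

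\textbf{A correction to your fallback remark.} You do not actually need this remark, but it contains an error. For even $n\ge 2$ the canonical decompositions give $X^{(n)}\cong X^{**}\oplus(\text{reflexive})$, not $X\oplus(\text{reflexive})$. The latter would require $X$ to be complemented in $X^{**}$, which you have not justified from coreflexivity. The even case is covered by your main argument anyway, since $X^{**}=R^{**}\oplus S^{**}$ with $S^{**}$ a separable dual.
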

\begin{proof}
	Since the quotient space $X^{**}/X$ is reflexive and separable, we have that its dual space $(X^{**}/X)^*\cong X^{\perp}$ is reflexive and separable. Using the canonical decomposition $X^{***}=X^*\oplus X^{\perp}$, we get that $X^{***}/X^*$ is also a separable reflexive space. Now we can apply Valdivia's result to the space $X^*$ to write $X^*=R\oplus S$, where $R\subseteq X^*$ is reflexive and $S\subseteq X^*$ is a separable space.
	
	We first show that $X^*$ has the R.N.P. by showing that both $R$ and $S$ have the R.N.P. Since $R$ is reflexive, so it is w$^*$-closed in $X^*$ (See \cite[Pg.~196]{Ho}). As every w$^*$-closed subspace of $X^*$ can be viewed as a annihilator of some closed subspace of $X$ (See \cite[Theorem~3.4]{RW}), we have that $R=Y^{\perp}$ for some closed subspace $Y\subseteq X$. So we get
	\[
	Y^*\cong X^*/Y^{\perp}=X^*/R\cong S.
	\]
	This implies that $S$ is a separable dual space. From \cite[Pg.~79, Theorem~1]{DU}, we have that $S$ satisfies the R.N.P. As $R$ is a reflexive Banach space, $R$ also has the R.N.P. (See \cite[Pg.~76, Corollary~13]{DU}).  Since $X^*$ is the sum of two subspaces, each of which satisfies the R.N.P., it follows that $X^*$ itself satisfies the R.N.P. (See \cite[Theorem~6.5b]{CA}). As $X^{***}/X^*$ is a separable reflexive space, a similar line of reasoning shows that $X^{**}$ has the R.N.P. By an inductive argument, one can show that all the higher order dual spaces of $X$ possess the R.N.P. Clearly, $X$ has the R.N.P. as $X\subseteq X^{**}$ is a closed subspace (See \cite[Pg.~81, Theorem~2]{DU}). This completes the proof.
\end{proof}
Now we recall a well known proposition from \cite{STT} for a better understanding of the quotient spaces of $L^{p}(\mu, X)$.

\begin{prop}\cite[Proposition~3.1]{STT}\label{P29}
	Let $(\Omega,\Sigma,\mu)$ be a probability space. Let $Y$ be a closed subspace of $X$. Fix $1\leq p<\infty$. Then $L^{p}(\mu, X)/L^{p}(\mu, Y)$ is isometric to $L^{p}(\mu, X/Y)$.
\end{prop}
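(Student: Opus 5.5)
We prove that for a closed subspace $Y\subseteq X$ and a probability space, the quotient $L^{p}(\mu,X)/L^{p}(\mu,Y)$ is isometric to $L^{p}(\mu,X/Y)$. The plan is to exhibit the natural map and check three things: it is well defined and contractive, its kernel is exactly $L^{p}(\mu,Y)$, and the induced map on the quotient is an isometry onto $L^{p}(\mu,X/Y)$.

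\textbf{Step 1 (the map).} Let $q:X\to X/Y$ be the quotient map and define $T:L^{p}(\mu,X)\to L^{p}(\mu,X/Y)$ by $Tf=q\circ f$. Composition with a bounded operator preserves strong measurability. The pointwise bound $\|q(f(\omega))\|\le\|f(\omega)\|$ gives $\|Tf\|_{p}\le\|f\|_{p}$.

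\textbf{Step 2 (the kernel).} Clearly $L^{p}(\mu,Y)\subseteq\ker T$. Conversely, if $q\circ f=0$ a.e., then $f(\omega)\in Y$ a.e. Since $f$ is an a.e. limit of simple functions and $Y$ is closed, $f$ is strongly measurable as a $Y$-valued function: for instance, take the $Y$-valued simple approximants obtained by composing the $X$-approximants with near-best approximations from the separable subspace $\overline{f(\Omega)}\cap Y$. Hence $f\in L^{p}(\mu,Y)$. So $T$ induces an injective contraction
\[
\widehat T:L^{p}(\mu,X)/L^{p}(\mu,Y)\to L^{p}(\mu,X/Y),\qquad \|\widehat T\|\le 1.
\]

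\textbf{Step 3 (isometric surjectivity).} This is the main obstacle. We need that for every $g\in L^{p}(\mu,X/Y)$ and every $\varepsilon>0$ there is $f\in L^{p}(\mu,X)$ with $q\circ f=g$ and
\[
\|f\|_{p}\le(1+\varepsilon)\|g\|_{p}+\varepsilon .
\]
The difficulty is that a measurable selection of near-minimal lifts is not immediate. The plan is to argue first for simple functions. If $g=\sum_{i}\chi_{A_i}\xi_i$ with disjoint $A_i$, choose $x_i\in X$ with $q(x_i)=\xi_i$ and $\|x_i\|\le(1+\varepsilon)\|\xi_i\|$. Then $f=\sum_{i}\chi_{A_i}x_i$ is a lift satisfying $\|f\|_{p}\le(1+\varepsilon)\|g\|_{p}$. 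This shows that $\widehat T$ maps the open unit ball onto a set containing all simple functions of norm less than $1$, and those are dense in the unit ball of $L^{p}(\mu,X/Y)$.

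For a general $g$, write $g=\sum_{n}g_n$ with simple $g_n$ satisfying $\|g_1\|_{p}<\|g\|_{p}+\varepsilon$ and $\|g_n\|_{p}<\varepsilon 2^{-n}$ for $n\ge 2$. Lift each $g_n$ as above and sum; the series converges absolutely in $L^{p}(\mu,X)$. This is the standard open-mapping/successive-approximation argument, and it yields a lift $f$ of $g$ with $\|f\|_{p}\le(1+\varepsilon)(\|g\|_{p}+2\varepsilon)$.

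\textbf{Conclusion.} Step 3 gives $\|[f]\|\le\|g\|_{p}$ in the quotient, so $\widehat T$ is surjective and $\|\widehat T^{-1}\|\le 1$. Combined with $\|\widehat T\|\le 1$ from Step 2, $\widehat T$ is an isometric isomorphism, which proves the proposition. The finiteness of $\mu$ is used only to ensure that simple functions are dense and integrable, and the same argument works for $p=1$.
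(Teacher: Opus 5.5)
Your argument is correct, but it takes a different route from the one the paper relies on.

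The paper does not prove this proposition itself. It cites \cite{STT}, whose proof uses Michael's selection theorem. That theorem supplies a continuous (nonlinear) right inverse $\sigma\colon X/Y\to X$ of $q$ with $\|\sigma(\xi)\|\le(1+\varepsilon)\|\xi\|$. Any $g$ can then be lifted pointwise as $f=\sigma\circ g$, and strong measurability is automatic from the continuity of $\sigma$.

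You instead lift only simple functions, which needs nothing beyond the definition of the quotient norm. You then reach a general $g$ by the successive-approximation argument from the open mapping theorem.

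Your route is more elementary and self-contained. The selection route gives a lift with pointwise control $\|f(\omega)\|\le(1+\varepsilon)\|g(\omega)\|$. That does not use density of simple functions, which is where $p<\infty$ enters your proof, so it carries over directly to $L^{\infty}(\mu,X)$ and similar function spaces.

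Neither argument needs $\mu$ to be a probability measure. Simple functions supported on sets of finite measure are dense in $L^{p}(\mu,X)$ for every measure when $p<\infty$.

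One small correction in Step 2: $\overline{f(\Omega)}\cap Y$ is not a subspace, and no separability is needed there. Replace each of the finitely many values $x$ of an approximant $s_n$ by some $y\in Y$ with $\|x-y\|\le d(x,Y)+1/n$. This gives $Y$-valued simple functions $t_n$ with $\|t_n(\omega)-f(\omega)\|\le 2\|s_n(\omega)-f(\omega)\|+1/n$ for almost every $\omega$, since $d(s_n(\omega),Y)\le\|s_n(\omega)-f(\omega)\|$ whenever $f(\omega)\in Y$.
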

We use Theorem~\ref{P32} and Proposition~\ref{P29} to explore the coreflexivity of the space $L^{p}(\mu, X)$ for $1<p<\infty$. We begin by recalling the fact that the space $L^{p}(\mu, X)$ is reflexive, whenever $X$ is reflexive (See \cite[Pg.~100, Corollary~2]{DU}).
\begin{thm}\label{T210}
     Let $(\Omega,\Sigma,\mu)$ be a probability space. Let $X$ be such that $X^{**}/X$ is a separable space. For $1< p<\infty$, if $X$ is coreflexive, then $L^{p}(\mu, X)$ is coreflexive.
\end{thm}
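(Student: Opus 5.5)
By Theorem~\ref{P32}, the hypotheses that $X$ is coreflexive and $X^{**}/X$ is separable imply that $X^*$ and $X^{**}$ both have the R.N.P. As discussed at the beginning of this section, this gives the identifications $L^{p}(\mu,X)^*=L^{q}(\mu,X^*)$ and
\[
L^{p}(\mu,X)^{**}=L^{q}(\mu,X^*)^*=L^{p}(\mu,X^{**}),
\]
where $\tfrac1p+\tfrac1q=1$. Under these identifications, the canonical embedding of $L^{p}(\mu,X)$ into its bidual is the natural inclusion $L^{p}(\mu,X)\subseteq L^{p}(\mu,X^{**})$ coming from $X\subseteq X^{**}$. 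I will check this compatibility explicitly. Take $f\in L^{p}(\mu,X)$ and $g\in L^{q}(\mu,X^*)$. Both $J(f)(g)$ and the pairing of $f$, viewed as an $X^{**}$-valued function, with $g$ equal $\int_\Omega g(\omega)(f(\omega))\,d\mu(\omega)$. So the two embeddings agree.

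The quotient to study is therefore
\[
L^{p}(\mu,X)^{**}/L^{p}(\mu,X)=L^{p}(\mu,X^{**})/L^{p}(\mu,X).
\]
I apply Proposition~\ref{P29} with $X^{**}$ as the ambient space and $X$ as the closed subspace. It gives an isometric isomorphism
\[
L^{p}(\mu,X^{**})/L^{p}(\mu,X)\cong L^{p}(\mu,X^{**}/X).
\]
Since $X$ is coreflexive, $X^{**}/X$ is reflexive. For $1<p<\infty$, the space $L^{p}(\mu,X^{**}/X)$ is then reflexive by \cite[Pg.~100, Corollary~2]{DU}. Reflexivity passes through isomorphisms, so $L^{p}(\mu,X)^{**}/L^{p}(\mu,X)$ is reflexive. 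Hence $L^{p}(\mu,X)$ is coreflexive.

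The main point to verify is the identification of the bidual with $L^{p}(\mu,X^{**})$ in a way compatible with the canonical embedding. This needs the R.N.P. for $X^*$ and for $X^{**}$. The R.N.P. for $X^*$ gives $L^{p}(\mu,X)^*=L^{q}(\mu,X^*)$. The R.N.P. for $X^{**}=(X^*)^*$ gives $L^{q}(\mu,X^*)^*=L^{p}(\mu,X^{**})$. Theorem~\ref{P32} supplies exactly these two facts, which is where separability of $X^{**}/X$ enters.

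One should also confirm that the isometry in Proposition~\ref{P29} respects the specific subspace $L^{p}(\mu,X)$ sitting inside $L^{p}(\mu,X^{**})$. This holds because the map in Proposition~\ref{P29} is $f+L^{p}(\mu,Y)\mapsto Q_Y\circ f$, which is induced pointwise by the quotient map. Finally, the probability-space hypothesis is used only to invoke Proposition~\ref{P29} as stated.
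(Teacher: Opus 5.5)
Your proposal is correct and follows the paper's proof essentially step for step. Theorem~\ref{P32} gives the R.N.P. for $X^*$ and $X^{**}$, which identifies $L^{p}(\mu,X)^{**}$ with $L^{p}(\mu,X^{**})$. Proposition~\ref{P29} then gives $L^{p}(\mu,X)^{**}/L^{p}(\mu,X)\cong L^{p}(\mu,X^{**}/X)$, which is reflexive. You also check explicitly that the canonical embedding matches the natural inclusion and that the quotient isometry is induced pointwise by $Q_X$; the paper leaves these details implicit, and your verifications are sound.
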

\begin{proof}
    Since the quotient space $X^{**}/X$ is separable and reflexive, by Theorem~\ref{P32}, we have that $X^*$ and $X^{**}$ have the R.N.P. So we can identify the dual space $L^{p}(\mu, X)^{**}$ as the space $L^{p}(\mu, X^{**})$. Now we use Proposition~\ref{P29} to get that the quotient space $L^{p}(\mu, X^{**})/L^{p}(\mu, X)$ is isometrically isomorphic to $L^{p}(\mu, X^{**}/X)$. By the hypothesis, the space $X^{**}/X$ is reflexive. Thus $L^{p}(\mu, X^{**}/X)$ is reflexive. Also, we have
	\[L^{p}(\mu, X)^{**}/L^{p}(\mu, X)\cong L^{p}(\mu, X^{**})/L^{p}(\mu, X)\cong L^{p}(\mu, X^{**}/X)\] 
	Hence $L^{p}(\mu, X)^{**}/L^{p}(\mu, X)$ is a reflexive space, that is, $L^{p}(\mu, X)$ is coreflexive. This completes the proof. 
\end{proof}
\begin{rem}
	As mentioned during the proof of Theorem~\ref{P32} that if $X$ is coreflexive and $X^{**}/X$ is a separable space, the same hypothesis is also satisfied by its dual space $X^*$, so we have that $L^{p}(\mu, X^*)$ is also a coreflexive space for all $1<p<\infty$.
\end{rem}
\begin{rem}
In the proof of Theorem~\ref{T210}, the assumption that $X^{**}/X$ is separable can be replaced by directly assuming the Radon–Nikodým property (R.N.P.) for both $X^*$ and $X^{**}$. 	
\end{rem}
In \cite{STT}, the authors make use of a topological selection theorem, namely the \textbf{Michael's Selection} theorem, to prove Proposition~\ref{P29}, which is later utilized in the derivation of Theorem~\ref{T210}. In the next theorem, we examine the coreflexivity of the space $L^{p}([0,1], X)$ using the \textbf{Von-Neumann's selection} theorem and a different set of hypothesis on $X$. which provides a measure theoretic route to the result. This is in contrast to the proof given in Theorem~\ref{T210}. Let $([0,1],\mathcal{A},\mu)$ denotes the Lebesgue measure space, where $\mathcal{A}$ is the Lebesgue $\sigma$-field and $\mu$ is the Lebesgue measure. Let $V$ be a Polish space (separable completely metrizable topological space). We denote by $\mathcal{B}_V$, the $\sigma$-algebra generated by the topology of $V$ and by $\mathcal{A}\otimes \mathcal{B}_V$, the product $\sigma$-algebra on $[0,1]\cross V$ (see \cite[p. 83, 87]{S} for details). We proceed by recalling the Von-Neumann's selection theorem from \cite{S} in our setting.
\begin{thm}[Von-Neumann]\cite[Corollary~5.5.8]{S}\label{VNT}
   If $B\in \mathcal{A}\otimes \mathcal{B}_V$, then there exists a $\mu$-measurable selection $h:[0,1]\to V$ such that $(t,h(t))\in B$ for all $t\in [0,1]$.
\end{thm}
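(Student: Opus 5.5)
The plan is the classical ``least element'' (Jankov--von Neumann) uniformization, followed by a completion argument to pass from Borel to Lebesgue measurability. As the statement is used, I read it with the implicit hypothesis that every section $B_t=\{v\in V:(t,v)\in B\}$ is nonempty, i.e. the projection of $B$ onto $[0,1]$ is all of $[0,1]$. Otherwise no selection defined at every $t$ can exist, and one only selects on the projection.

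\textbf{Step 1 (reduction to a Borel set).} Every $A\in\mathcal{A}$ differs from a Borel set by a $\mu$-null set. Let $\mathcal{C}$ be the family of sets $B\in\mathcal{A}\otimes\mathcal{B}_V$ for which there exist a Borel set $B'\subseteq[0,1]\cross V$ and a Borel null set $N\subseteq[0,1]$ such that
\[
B\setminus(N\cross V)=B'\setminus(N\cross V).
\]
Measurable rectangles belong to $\mathcal{C}$. Countable unions are handled by taking the union of the countably many null sets, and complements are immediate, so $\mathcal{C}$ is a $\sigma$-algebra and equals $\mathcal{A}\otimes\mathcal{B}_V$. Hence it suffices to select for $B'$ off $N$. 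On $N$ we define $h(t)$ to be an arbitrary point of $B_t$. Since $N$ is null and $\mathcal{A}$ is complete, any such choice is $\mu$-measurable on $N$.

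\textbf{Step 2 (the least-element selection for the Borel set).} Write $P=\pi_{[0,1]}(B')$. Since $[0,1]\cross V$ is Polish, there are a closed set $F\subseteq\mathbb{N}^{\mathbb{N}}$ and a continuous surjection $f:F\to B'$. Put $g=\pi_{[0,1]}\circ f$, so $g(F)=P$. For $t\in P$, the fibre $g^{-1}(t)$ is a nonempty closed subset of $\mathbb{N}^{\mathbb{N}}$, and therefore has a lexicographically least element $s(t)$. This element is built coordinatewise: at each stage take the least $n$ such that the corresponding cylinder still meets the fibre, and closedness guarantees the limit lies in the fibre. Define
\[
h(t)=\pi_V\bigl(f(s(t))\bigr)\qquad (t\in P\setminus N),
\]
so that $(t,h(t))=f(s(t))\in B'$. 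Off $N$ this point lies in $B$.

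\textbf{Step 3 (measurability; the main obstacle).} One must show that $s$, and hence $h$, is $\mu$-measurable. For a basic cylinder $N_\sigma$, the set $\{t: s(t)\in N_\sigma\}$ is expressible through finitely many Boolean operations on sets of the form $g(F\cap N_\tau)$. Indeed, $s(t)\in N_\sigma$ iff $t\in g(F\cap N_\sigma)$ and, for each lexicographically earlier $\tau$ of the same length, $t\notin g(F\cap N_\tau)$. Each $g(F\cap N_\tau)$ is a continuous image of a Polish space, hence analytic. By Lusin's theorem, analytic subsets of $[0,1]$ are Lebesgue measurable, i.e. belong to $\mathcal{A}$. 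So $s^{-1}(N_\sigma)\in\mathcal{A}$, and since cylinders generate the Borel sets of $\mathbb{N}^{\mathbb{N}}$, $s$ is $\mathcal{A}$-measurable. Composing with the continuous map $\pi_V\circ f$ shows that $h$ is $\mu$-measurable on $P\setminus N$, which together with Step~1 finishes the proof.

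I expect the delicate point to be exactly this appeal to measurability of analytic sets. The set $P$ and the preimages under $s$ are in general not Borel, and it is the completeness of the Lebesgue $\sigma$-field $\mathcal{A}$ that makes $h$ measurable.
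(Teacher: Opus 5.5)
The paper does not prove this statement; it quotes it from Srivastava's book, so there is no in-paper argument to compare against. Your proposal is a correct, self-contained proof of the case the paper uses, via the standard Jankov--von Neumann leftmost-branch uniformization.

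\begin{itemize}
\item The $\sigma$-algebra argument in Step~1 correctly replaces $B$ by a Borel set $B'$ outside a Borel null set $N$.
\item The lexicographically least point of each closed fibre gives a section lying in $B'$.
\item Step~3 correctly reduces measurability of $s$ to Lusin's theorem that analytic subsets of $[0,1]$ are Lebesgue measurable.
\end{itemize}

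You are also right that the printed statement needs the hypothesis $\pi_{[0,1]}(B)=[0,1]$; for $B=\emptyset$ it is false. This hypothesis does hold for the graph $G(F)$ in the proof of Theorem~\ref{T212}, since every $F(t)$ is nonempty.

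One small correction. For $\sigma$ of length $k$ there may be infinitely many lexicographically earlier $\tau$ of length $k$; for example, every $(0,n)$ precedes $(1,0)$. So the Boolean combination you write down is countable, not finite. This is harmless because $\mathcal{A}$ is a $\sigma$-algebra. Alternatively, you can require $t\notin g(F\cap N_\tau)$ only for the finitely many $\tau=(\sigma(0),\dots,\sigma(i-1),m)$ with $i<k$ and $m<\sigma(i)$, whose cylinders cover all the earlier ones.

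On the route: your Step~1 exploits the fact that $\mathcal{A}$ is the completion of the Borel $\sigma$-algebra of a Polish space. Everything therefore reduces to uniformizing a Borel set and to the measurability of analytic sets. Abstract versions of the theorem, for an arbitrary complete $\sigma$-finite measure space in place of $([0,1],\mathcal{A},\mu)$, are commonly obtained instead from the closure of the completed $\sigma$-algebra under the Souslin operation. Your argument is more concrete and covers exactly what the paper needs.
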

We now present a preparatory lemma, which will be used subsequently in the proof of Theorem~\ref{T212}.
\begin{lem}\label{L36}
	Let $X$ be a space such that it has the R.N.P. and $X^*$ is separable. If $X$ is coreflexive, then all the dual spaces of $X$ have the R.N.P.
\end{lem}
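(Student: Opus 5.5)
The plan is to treat odd-order and even-order duals separately, by induction on the order of the dual. The two ingredients are the canonical decompositions $X^{(n+2)}=X^{(n)}\oplus (X^{(n-1)})^{\perp}$ and the fact from \cite[Theorem~2.1]{CJ} that a space is coreflexive if and only if its dual is. By the latter, every $X^{(n)}$ is coreflexive, so Proposition~\ref{P27} applied to $X^{(n)}$ shows that $(X^{(n)})^{\perp}\subseteq X^{(n+3)}$ is reflexive for every $n\geq 0$.

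\emph{Odd duals.} Since $X^*$ is separable and is a dual space, it has the R.N.P. by \cite[Pg.~79, Theorem~1]{DU}. Now $X^{***}=X^*\oplus X^{\perp}$ with $X^{\perp}$ reflexive, hence with the R.N.P. A topological direct sum of two spaces with the R.N.P. has the R.N.P. (\cite[Theorem~6.5b]{CA}), so $X^{***}$ has the R.N.P. Inductively, $X^{(2k+1)}=X^{(2k-1)}\oplus (X^{(2k-2)})^{\perp}$, and the second summand is reflexive, so every odd dual has the R.N.P.

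\emph{Even duals.} Here the hypothesis that $X$ has the R.N.P. enters. In $X^{**}$ the subspace $J_X(X)$ has the R.N.P. and the quotient $X^{**}/X$ is reflexive. I would prove the following three-space statement: if $Y\subseteq Z$ is closed, $Y$ has the R.N.P. and $Z/Y$ is reflexive, then $Z$ has the R.N.P. The approach is through bounded $Z$-valued martingales $(f_n)$ on $[0,1]$.
\begin{itemize}
\item[$\bullet$] Their images $(Q f_n)$ in $Z/Y$ converge a.e. and in $L^1$, by reflexivity of $Z/Y$.
\item[$\bullet$] One then tries to correct $(f_n)$ by a measurable lifting of the limit of $(Q f_n)$, so that the corrected martingale takes values in $Y$ up to a uniformly integrable error.
\item[$\bullet$] The R.N.P. of $Y$ then gives convergence of the corrected martingale, and hence of $(f_n)$.
\end{itemize}
The measurable lifting is where a selection theorem of the type of Theorem~\ref{VNT} would be used, with the polish-space setting available because we may restrict to separable subspaces: the R.N.P. is separably determined.

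Once $X^{**}$ is known to have the R.N.P., the higher even duals follow by the same decomposition argument as before, since $X^{(2k+2)}=X^{(2k)}\oplus (X^{(2k-1)})^{\perp}$ with reflexive second summand.

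I expect the main obstacle to be the passage from $X$ to $X^{**}$, because the R.N.P. is not a three-space property in general. If the martingale lifting argument resists, my fallback is a different route that uses separability of $X^*$. Since $X^{**}=(X^*)^*$ with $X^*$ separable, $X^{**}$ has the R.N.P. if and only if $X^{**}$ is separable. So it would suffice to show that $X$ is separable and that $X^{**}/X$ is separable; in that case the conclusion also follows from Theorem~\ref{P32}. For $X$ this holds since $X^*$ is separable. For $X^{**}/X$, reflexivity reduces the question to producing a countable subset with weakly dense image. I would use the w$^*$-metrizability of $X_1^{**}$ and w$^*$-sequential density of $X$ in $X^{**}$, together with the argument of Lemma~\ref{L23}, to build that countable set.
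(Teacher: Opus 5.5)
Your main line is correct, but it is organized differently from the paper's proof. The paper applies the three-space theorem \cite[Theorem~6.5b]{CA} at every level. Since $X$ and $X^{**}/X$ have the R.N.P., so does $X^{**}$. A dual of a separable space with the R.N.P. is separable, so $X^{**}$ is separable, and $X^*$ then satisfies the same hypotheses as $X$ (it is coreflexive by \cite[Theorem~2.1]{CJ}). The paper iterates this, and gets as a by-product that all duals are separable.

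You need an extension argument only once, for $X\subseteq X^{**}$, where no direct-sum decomposition is at hand. Every other level comes from $X^{(n+2)}=X^{(n)}\oplus (X^{(n-1)})^{\perp}$ with reflexive second summand, and stability of the R.N.P. under finite direct sums is elementary. This separates the roles of the hypotheses: separability of $X^*$ serves only the odd duals, and the R.N.P. of $X$ only the even ones. It also avoids the separability bootstrap, at the cost of not recording that the duals are separable.

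Two corrections.

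First, the R.N.P. \emph{is} a three-space property. This is exactly what the paper quotes from \cite[Theorem~6.5b]{CA}, and your martingale sketch proves it in general, since reflexivity of $Z/Y$ is used only to make $(Qf_n)$ converge. To make the sketch precise:
\begin{itemize}
\item For a uniformly bounded martingale $(f_n)$ with respect to $(\mathcal F_n)$, put $g=\lim Qf_n$, so that $Qf_n=E(g\mid\mathcal F_n)$.
\item Take a continuous lifting $\sigma$ with $Q\circ\sigma=\mathrm{id}$ and $\|\sigma(w)\|\le C\|w\|$ (Bartle--Graves), and set $\tilde g=\sigma\circ g$. This is bounded, Bochner measurable and $\mathcal F_\infty$-measurable.
\item Then $f_n-E(\tilde g\mid\mathcal F_n)$ is a bounded martingale taking values exactly in $Y$, hence convergent, and $E(\tilde g\mid\mathcal F_n)\to\tilde g$ a.e.\ in any Banach space.
\end{itemize}
No measurable selection theorem or separable reduction is needed, but the norm control on the lifting is.

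Second, your fallback cannot work, because it uses only that $X^*$ is separable and $X$ is coreflexive. The predual $X=JT_*$ of the James tree space is a counterexample. It has separable dual $JT$, and it is coreflexive by \cite[Theorem~2.1]{CJ} because $JT$ is. Yet $X^{**}/X$ is nonseparable: $(X^{**}/X)^*\cong X^{\perp}\cong JT^{**}/JT\cong\ell^2(\Gamma)$ with $\Gamma$ uncountable. Moreover $X^{**}=JT^*$, a nonseparable dual of a separable space, fails the R.N.P. So the hypothesis that $X$ has the R.N.P. is indispensable, and separability of $X^{**}$ can only come after the three-space step, not before it.
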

\begin{proof}
	Let $X$ be as given in the theorem. By the hypothesis, we have that $X^{**}/X$ is reflexive, hence, satisfies the R.N.P. (See \cite[Pg.~76, Corollary~13]{DU}). So we get that both $X^{**}/X$ and $X$ satisfy the R.N.P. It is shown in \cite[Theorem~6.5b]{CA} that if $Y$ and $X/Y$ have the R.N.P. for some closed subspace $Y\subseteq X$, then $X$ has the R.N.P. Applying this result to $X^{**}$ and $X$, we get that $X^{**}$ satisfies the R.N.P. As every separable dual space satisfies the R.N.P. (See\cite[Pg.~79, Theorem~1]{DU}), we have that $X^{*}$ has the R.N.P. 
	
	To show $X^{***}$ has the R.N.P. We first observe that $X^{**}$ is a separable space as it satisfies the R.N.P. and $X^*$ is separable. It is known (See \cite[Theorem~2.1]{CJ}) that $X^*$ is coreflexive if and only if $X$ is coreflexive. So we get that $X^*$ satisfies the hypothesis of the theorem. We can now apply the above argument to $X^*$ to conclude that $X^{***}$ is separable and satisfies the R.N.P. Inductively, we get that all the dual spaces of $X$ has the R.N.P. Additionally, all the dual spaces of $X$ are separable as in each step the dual space has the R.N.P. and the predual space is separable. 
\end{proof}
We now present the main theorem which will utilise Lemma~\ref{L36} to demonstrate the coreflexivity of the space $L^{p}([0,1],X)$ for $1<p<\infty$ using Von-Neumann's selection theorem.
Let $K$ be a Polish space (separable completely metrizable topological space) and let $T:[0,1]\to 2^{K}$ be a set-valued map. We define the graph $G(T)=\{(t,k)\in [0,1]\cross K: k\in T(t)\}$ of the map $T$. Given a set $E$, we denote by $E^{c}$, the set theoretic complement of $E$.
\begin{thm}\label{T212}
    Let $X$ be such that it has the R.N.P. and $X^*$ is separable. If $X$ is coreflexive, then $L^{p}([0,1], X)$ is coreflexive for all $1<p<\infty$. 
\end{thm}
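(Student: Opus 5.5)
By Lemma~\ref{L36}, every dual space of $X$ has the R.N.P. and is separable; in particular $X^*$ and $X^{**}$ have the R.N.P. Hence, as at the start of this section,
\[
L^{p}([0,1],X)^{**}=L^{q}([0,1],X^{*})^{*}=L^{p}([0,1],X^{**}),
\]
with the canonical embedding being the natural inclusion $L^{p}([0,1],X)\subseteq L^{p}([0,1],X^{**})$. It therefore suffices to show that
\[
L^{p}([0,1],X^{**})/L^{p}([0,1],X)\cong L^{p}([0,1],X^{**}/X).
\]
Since $X^{**}/X$ is reflexive, $L^{p}([0,1],X^{**}/X)$ is then reflexive by \cite[Pg.~100, Corollary~2]{DU}. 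This replaces the appeal to Proposition~\ref{P29} by a direct argument using Theorem~\ref{VNT}.

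Write $V=X^{**}$ and $Q=Q_{X}:V\to V/X$, and define $\Phi:L^{p}([0,1],V)\to L^{p}([0,1],V/X)$ by $\Phi(f)=Q\circ f$. The map $\Phi$ is a well-defined contraction, since $Q\circ f$ is strongly measurable and $\|Q f(t)\|\le\|f(t)\|$. Its kernel is $L^{p}([0,1],X)$: if $Qf=0$ a.e., then $f$ takes values in $X$ a.e., and because $X$ is closed in $V$, $f$ is strongly measurable as an $X$-valued map. Hence $\Phi$ induces an injective contraction from the quotient, and by the open mapping theorem it is enough to show that $\Phi$ is surjective. I will in fact aim for the quantitative version: for every $g\in L^{p}([0,1],V/X)$ and every $\varepsilon>0$ there is $f$ with $Qf=g$ a.e. and $\|f(t)\|\le(1+\varepsilon)\|g(t)\|$. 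This gives an isometry.

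The lifting uses Theorem~\ref{VNT}. The space $V=X^{**}$ is separable by Lemma~\ref{L36}, so it is a Polish space. Choose a Borel representative of $g$, which is possible because $V/X$ is separable and $g$ is strongly measurable. Then set
\[
B=\{(t,v)\in[0,1]\cross V:\ Q(v)=g(t),\ \|v\|\le(1+\varepsilon)\|g(t)\|\}.
\]
The set $B$ lies in $\mathcal{A}\otimes\mathcal{B}_V$: the maps $(t,v)\mapsto Q(v)-g(t)$ and $(t,v)\mapsto(1+\varepsilon)\|g(t)\|-\|v\|$ are measurable for the product $\sigma$-algebra, as a continuous function of $v$ combined with a measurable function of $t$, and $B$ is the intersection of the preimages of $\{0\}$ and $[0,\infty)$. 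Every section $B_t$ is nonempty by the definition of the quotient norm. This covers the case $g(t)=0$ as well, where $v=0$ works.

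Theorem~\ref{VNT} now gives a $\mu$-measurable $h:[0,1]\to V$ with $(t,h(t))\in B$ for all $t$. Since $V$ is separable, Pettis' theorem shows that $h$ is strongly measurable. The bound $\|h(t)\|\le(1+\varepsilon)\|g(t)\|$ places $h$ in $L^{p}([0,1],V)$, and $\Phi(h)=g$.

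The main obstacle is the measurability bookkeeping. One must check that the $\mu$-measurability in Theorem~\ref{VNT} (with respect to the Lebesgue $\sigma$-field) yields a strongly measurable Bochner function, and that $B$ is genuinely in the product $\sigma$-algebra. Separability of $X^{**}$, supplied by Lemma~\ref{L36}, is exactly what makes $V$ Polish and Pettis' theorem applicable. Once surjectivity and the norm estimate are in place, the identification of the quotient with $L^{p}([0,1],X^{**}/X)$ and its reflexivity finish the proof.
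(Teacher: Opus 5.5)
Your proof is correct, but it takes a different route from the paper's.

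The paper works on the dual side. Via Proposition~\ref{P27} it reduces the problem to reflexivity of the annihilator $L^{p}([0,1],X)^{\perp}$ inside $L^{p}([0,1],X)^{***}=L^{q}([0,1],X^{***})$. It then applies Theorem~\ref{VNT} on $[0,1]\times S(X)$ to show, by contradiction, that every $f$ in this annihilator takes values in $X^{\perp}$ almost everywhere. This gives $L^{p}([0,1],X)^{\perp}\subseteq L^{q}([0,1],X^{\perp})$, which is reflexive.

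You stay in the bidual and prove the isometric quotient identification $L^{p}([0,1],X^{**})/L^{p}([0,1],X)\cong L^{p}([0,1],X^{**}/X)$ directly. In effect you re-derive Proposition~\ref{P29} in this setting, with Von Neumann's theorem replacing Michael's selection theorem, and then finish exactly as in Theorem~\ref{T210}.

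What your route buys: a sharper, isometric description of $L^{p}([0,1],X)^{**}/L^{p}([0,1],X)$. It needs the R.N.P.\ only for $X^{*}$ and $X^{**}$, and bypasses Proposition~\ref{P27} and the third dual entirely. The price is that the selection takes place in $V=X^{**}$, so you need the separability of $X^{**}$ from Lemma~\ref{L36]}. That separability also supplies the separability of $V/X$ used for the joint measurability of $(t,v)\mapsto Q(v)-g(t)$. You also have to build the bound $\|v\|\le(1+\varepsilon)\|g(t)\|$ into $B$ so that the selector is $p$-integrable. You do this correctly, including checking that the sections are nonempty where $g(t)=0$.

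What the paper's route buys: its selection step is lighter. It only needs $X$ to be separable, so that $S(X)$ is Polish, and its selector is automatically bounded because it is $S(X)$-valued. The costs are that it needs the R.N.P.\ up to $X^{***}$ to identify $L^{p}([0,1],X)^{***}$ with $L^{q}([0,1],X^{***})$, and it only establishes an inclusion of the annihilator into a reflexive space.
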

\begin{proof}
    Since the Lebesgue $\sigma$-field is complete, by discarding null sets if necessary, we consider all the functions are everywhere defined. Recall from the Proposition~\ref{P27} that a space $X$ is coreflexive if and only if the subspace $X^{\perp}\subseteq X^{***}$ is reflexive. It is enough to show that $L^{p}([0,1], X)^{\perp}\subseteq L^{p}([0,1], X)^{***}$ is reflexive. By the hypothesis and Lemma~\ref{L36}, we have that all the duals of $X$ satisfy the R.N.P. So we can identify the space $L^{p}([0,1], X)^{***}$ as the space $L^{q}([0,1], X^{***})$. We claim that $L^{p}([0,1], X)^{\perp}\subseteq L^{q}([0,1], X^{\perp})\subseteq L^{q}([0,1], X^{***})$. Suppose $f\in L^{p}([0,1], X)^{\perp}$. We establish the claim by showing that the set $E=\{t\in [0,1]:f(t)\notin X^{\perp}\}\subseteq [0,1]$ is a null set. 
	
	First we show that the set $E$ is measurable. Since $X^*$ is a separable space, we have that $X$ is separable, and hence, $S(X)$ is separable. Let $\{x_{n}:n\in \mathbb{N}\}\subseteq S(X)$ be a countable dense subset. Let $A=\{x^{***}\in X^{***}:x^{***}\notin X^{\perp}\}$. For each $n\in \mathbb{N}$, we define $A_{n}=\{x^{***}\in X^{***}:x^{***}(x_{n})>0\}$. By Hahn-Banach theorem, we have have that $A_{n}$ is non-empty for each $n\in \mathbb{N}$. Also, each $A_{n}\subseteq X^{***}$ is a measurable subset as the evaluation at $x_{n}$ is a continuous linear function on $X^{***}$. It is easy to check that $A=\bigcup_{n=1}^{\infty}A_{n}$. So we get that $A\subseteq X^{***}$ is measurable. Since the function $f:[0,1]\to X^{***}$ is measurable, we have that $f^{-1}(A)$, that is, $E$ is a measurable subset of $[0,1]$. Now suppose that $\mu(E)>0$. This implies for each $t\in E$ there exists $x_{t}\in S(X)$ such that $f(t)(x_{t})>0$. For a fix point $x_{0}\in S(X)$, we define a set valued map $F:[0,1]\to 2^{S(X)}$ by
    \[
    F(t)=
    \begin{cases}
       \{x_0\}, & t \in E^{c}, \\
       \{x\in S(X): f(t)(x)>0\}, & t \in E.
    \end{cases}
    \]

The graph of $F$ is given by $G(F) = \{ (t, x)\in[0,1]\cross S(X)  :x\in F(t) \}$. It is easy to see that $G(F)=E^{c}\times \{x_{0}\}\bigcup\{(t,x)\in [0,1]\times S(X):f(t)(x)>0\}$. 

We next show that the function $(t,x)\to f(t)(x)$ is measurable. Indeed, let $f$ is an indicator function, say, $f = x_0^{***} \chi_A$ for some measurable set $A$ and $x_{0}^{***}\in S(X^{***})$. Then
\[
f(t)(x) =
\begin{cases} 
x_{0}^{***}(x), & t \in A, \\
0, & t \notin A.
\end{cases}
\]
As $x_{0}^{***}$ is continuous function on $S(X)$, we get that the function $(t,x)\to f(t)(x)$ is measurable, when $f$ is an indicator function. Since a simple function is a finite sum of indicator functions, we have that the function $(t,x)\to f(t)(x)$ is measurable, when $f$ is a simple function (see \cite[Chapter~II]{DU}). In the case, when $f$ is not a simple function. Since $f:[0,1]\to X^{***}$ is a Bochner $p$-integrable, we have a sequence $(s_{n})_{n\in \mathbb{N}}\in L^{q}([0,1], X^{***})$ of simple functions such that $s_{n}(t)\to f(t)$ almost everywhere in the norm. In particular, $s_{n}(t)(x)\to f(t)(x)$ almost everywhere and for each $x\in S(X)$. We know that for each $n\in \mathbb{N}$, the function $(t,x)\to s_{n}(t)(x)$ is measurable. This gives that $f$ is measurable as it is the limit of a sequence of measurable functions (see \cite[Chapter~II]{DU}). 

Consequently, $G(F)$ is a Borel set. Since $X^*$ is separable, so $X$ is separable. We have $S(X)$ is a Polish space (separable completely metrizable topological space) with respect to the norm topology. Applying Von-Neumann's selection theorem to $G(F)$, we get a measurable selector $h: [0,1]\to S(X)$ such that $f(t)(h(t))>0$ for each $t\in E$. Since $X$ is separable, by Pettis measurability theorem (See \cite[Pg.~42]{DU}), we have that the function $h:[0,1]\to S(X)$ is strongly measurable. That is, $h\in L^{p}([0,1],X)$. As we have choosen $f\in L^{p}([0,1], X)^{\perp}$, we must have
\[
f(h)=\int_{[0,1]}f(t)(h(t))dt=\int_{E}f(t)(h(t))dt=0
\]
So we get, $f(t)(h(t))=0$ almost everywhere on $E$. This is a contradiction, as $f(t)(h(t))>0$ for each $t\in E$. This establishes the claim that $L^{p}([0,1], X)^{\perp}\subseteq L^{q}([0,1], X^{\perp})$. By the hypothesis, $X$ is coreflexive which implies $X^{\perp}$ is reflexive. This gives that $L^{q}([0,1], X^{\perp})$ is reflexive. So we have that $L^{p}([0,1], X)^{\perp}$ is a reflexive subspace of $L^{p}([0,1], X)^{***}$. Thus by Proposition~\ref{P27}, the space $L^{p}([0,1], X)$ is coreflexive. This completes the proof.
\end{proof}

Recall from the introduction that a space $X$ is quasi-reflexive if the quotient space $X^{**}/X$ is finite dimensional. It is easy to see that given a quasi-reflexive space $X$, there exists a continuous projection $P$ from $X^{**}$ on to $X$ and a finite dimensional subspace $F\subseteq X^{**}$ such that ker$(P)=F$ and range$(P)=X$. So we can write $X^{**}=X\oplus F$. It is straightforward to check from the last decomposition that $X$ is a dual space. Since $X$ is a dual space, it admits a canonical projection from $X^{**}$ onto $X$. Therefore, we may assume that the projection $P$ mentioned above is contractive. We use this projection $P$ to establish Lemma~\ref{T214}. Let us begin by recalling a well know result established by \textbf{Zhibao Hu and Bor-Luh Lin} (See \cite[Proposition~2.9]{HUL}), that for any closed convex subset $K\subseteq X^*$, we have that $K$ and $\overline{K}^{\textnormal{w}^*}$ have same w$^*$-PC's. The following fact is a consequence of the above proposition.
\begin{fact}\label{F38}
Let $X$ Banach space. If $x\in X_{1}$ is a \textnormal{w}-PC, then it remains a \textnormal{w}-PC in all the higher even-order dual unit balls.
\end{fact}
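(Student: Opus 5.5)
The strategy is to deduce Fact~\ref{F38} from the Hu--Lin result quoted just above: for a closed convex set $K\subseteq Z^{*}$, the sets $K$ and $\overline{K}^{\textnormal{w}^*}$ have the same w$^*$-PC's. The required passage from w-PC's to w$^*$-PC's is done by viewing $X$ as sitting inside a dual space, and an induction then climbs through the even duals.

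\textbf{Step 1 (base case).} Apply the result with $Z=X^{*}$. Then $Z^{*}=X^{**}$, and we take $K=J_X(X_1)\subseteq X^{**}$, which is norm closed and convex.
\begin{itemize}
\item[(a)] On $K$, the w$^*$-topology $\sigma(X^{**},X^*)$ restricts to exactly the weak topology of $X$.
\item[(b)] Hence a w-PC of $X_1$ is precisely a point of continuity of $I:(K,\textnormal{w}^*)\to(K,\|\cdot\|)$, that is, a w$^*$-PC of $K$ in the norm sense used by Hu--Lin.
\item[(c)] By Lemma~\ref{L21}, $\overline{K}^{\textnormal{w}^*}=X_1^{**}$.
\item[(d)] The quoted proposition then shows that $J_X(x)$ is a w$^*$-PC of $X_1^{**}$. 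In other words, $I:(X_1^{**},\textnormal{w}^*)\to (X_1^{**},\|\cdot\|)$ is continuous at $x$.
\end{itemize}
Since the w$^*$-topology on $X^{**}$ is coarser than the weak topology $\sigma(X^{**},X^{***})$, continuity from the w$^*$-topology implies continuity from the weak topology. Therefore $x$ is a w-PC of $X_1^{**}$.

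\textbf{Step 2 (induction).} Step 1 in fact gives the stronger conclusion that $x$ is a w$^*$-norm PC of $X_1^{**}$, hence a w-PC of $X_1^{**}$. Now replace $X$ by $X^{(2k)}$ and apply Step 1 again, with $K$ the canonical image of $X^{(2k)}_1$ inside $X^{(2k+2)}$. The w-PC property passes from $X_1^{(2k)}$ to $X_1^{(2k+2)}$. Induction on $k$ yields the statement for every even $n$.

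\textbf{Main point to check.} The delicate step is matching definitions with the Hu--Lin proposition. I expect their notion of "w$^*$-PC of $K$" to mean continuity of the identity from $(K,\textnormal{w}^*)$ into $(K,\|\cdot\|)$, and not into the weak topology. If their version were w$^*$-to-w, one would need the w$^*$-w PC analogue of the argument, and it would fail to give norm continuity in the bidual. So I would first verify that norm continuity is indeed their notion. I would also verify the identification in Step 1(a): the w$^*$-topology on $J_X(X)$ coincides with the weak topology of $X$, because the functionals determining both are exactly $X^*$.

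Once these two points are checked, no hypothesis on $X$ such as quasi-reflexivity is needed. This is consistent with the statement being made for an arbitrary Banach space.
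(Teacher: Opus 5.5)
Your proposal is correct and is essentially the paper's argument: apply the Hu--Lin proposition to $K=J_X(X_1)\subseteq X^{**}$, whose w$^*$-closure is $X_1^{**}$ by Goldstine, and then iterate with the canonical image of $X_1^{(2k)}$ in $X^{(2k+2)}$. You also make explicit two identifications that the paper's two-line proof leaves implicit: $\sigma(X^{**},X^*)$ restricted to $J_X(X_1)$ is the weak topology of $X$, and w$^*$-to-norm continuity on $X_1^{**}$ implies w-to-norm continuity. Your concern about which notion Hu--Lin use is easily settled, since the w$^*$-to-norm version needed here follows directly from the w$^*$-closedness of closed balls in a dual space.
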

\begin{proof}
	To see that $x\in X_{1}^{**}$ is a w-PC, we apply the above proposition to $K=X_{1}\subseteq X_{1}^{**}$. By using this proposition inductively to $K=X_{1}^{(n)}$ for each even $n\in \mathbb{N}$, one can show that $x$ remains a w-PC in all the higher even-order dual unit balls.
\end{proof}
In the below, we provide an easy alternative proof of Fact~\ref{F38} for a special class of Banach spaces, namely, quasi-reflexive spaces.

\begin{lem}\label{T214}
	Let $X$ be a quasi-reflexive space. If $x\in X_{1}$ is a \textnormal{w}-PC, then it remains a \textnormal{w}-PC in all the higher even-order dual unit balls.
\end{lem}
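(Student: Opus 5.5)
The plan is to use the decomposition $X^{**}=X\oplus F$ described above, where $F$ is finite dimensional and $P:X^{**}\to X$ is the contractive projection with $\ker P=F$. First I prove that $x$ is a w-PC of $X_{1}^{**}$. The step from $X^{**}$ to $X^{(n)}$ for even $n$ then follows by induction: $X$ quasi-reflexive implies $X^{**}$ quasi-reflexive (by \cite[Theorem~2.1]{CJ} applied twice, or directly $X^{(4)}/X^{**}\cong (X^{**}/X)^{**}$ finite dimensional). So the same argument applies with $X^{**}$ in place of $X$.

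For the base step, I argue by nets. Let $(x_\alpha^{**})\subseteq X_{1}^{**}$ with $x_\alpha^{**}\to x$ in the weak topology of $X^{**}$, i.e.\ $\sigma(X^{**},X^{***})$. Write
\[
x_\alpha^{**}=P x_\alpha^{**}+(I-P)x_\alpha^{**}.
\]
Since $P$ and $I-P$ are bounded linear maps, they are weak-weak continuous. Hence $Px_\alpha^{**}\to Px=x$ weakly in $X^{**}$, and since $Px_\alpha^{**}\in X$ this is weak convergence in $X$ (each $x^*\in X^*$ is an element of $X^{***}$). Also $\|Px_\alpha^{**}\|\le 1$ because $P$ is contractive, so $Px_\alpha^{**}\in X_{1}$. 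Because $x$ is a w-PC of $X_{1}$, we get $\|Px_\alpha^{**}-x\|\to 0$.

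For the other component, $(I-P)x_\alpha^{**}\to (I-P)x=0$ weakly in $F$. Since $F$ is finite dimensional, the weak and norm topologies on $F$ coincide, so $\|(I-P)x_\alpha^{**}\|\to 0$. Combining the two parts gives $\|x_\alpha^{**}-x\|\to 0$, so $x$ is a w-PC of $X_{1}^{**}$.

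The main obstacle is justifying the contractive projection with finite-dimensional kernel. The paper asserts that $X$ is a dual space, so the canonical (Dixmier) projection $P$ from $X^{**}$ onto $X$ is contractive with w$^*$-closed kernel. Its kernel is then a complement of $X$, hence isomorphic to $X^{**}/X$ and finite dimensional. Contractivity is only used to keep $Px_\alpha^{**}$ in $X_{1}$. If I want to avoid relying on it, I would take any bounded projection $P$. Then $Px_\alpha^{**}\in \|P\|X_{1}$, and w-PC at $x$ of $X_{1}$ transfers to the ball $\|P\|X_{1}$ only with care, since $x$ is not a scaled point. So the contractive choice is the cleanest path.

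For the induction, the contractive projection in $X^{(n+2)}$ onto $X^{(n)}$ again comes from $X^{(n)}$ being quasi-reflexive, hence a dual space.
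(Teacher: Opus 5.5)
Your argument is the paper's argument: split the net along $X^{**}=X\oplus F$, then use weak continuity of $P$ and finite-dimensionality of $F$. The step you flagged as the main obstacle is a genuine gap, and the paper's proof has the same gap.

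A quasi-reflexive space is only \emph{isomorphic} to a dual space. The map $x\mapsto x+F$ sends $X$ onto $X^{**}/F=(F_\perp)^*$, but not isometrically in general. Since finite-dimensional subspaces are w$^*$-closed, a contractive projection of $X^{**}$ onto $X$ exists exactly when $X$ is isometric to a dual space, and this can fail. For example, give the James space an equivalent norm whose dual norm is locally uniformly rotund; this is possible because its dual is separable. Goldstine's lemma and local uniform rotundity at $x^*$ then show that each $x^*\in X^*$ has $J_{X^*}x^*$ as its \emph{only} norm-preserving extension to $X^{**}$. If $P$ were a norm-one projection onto $X$, then $P^*J_{X^*}x^*$ would be such an extension. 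Hence $w(x^*)=(P^*J_{X^*}x^*)(w)=(J_{X^*}x^*)(Pw)=0$ for every $w\in\ker P$ and every $x^*$. This forces $\ker P=\{0\}$, so $X$ would be reflexive. So in the base step you cannot invoke a contractive (Dixmier) projection. From $X^{(n)}$ to $X^{(n+2)}$ with $n\ge 2$ there is no issue, because $X^{(n)}$ really is a dual space.

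The repair is short, removes the need for contractivity, and answers the worry at the end of your proposal. Take any bounded projection $P$ of $X^{**}$ onto $X$ with finite-dimensional kernel $F$, and treat the $F$-component first:
\begin{itemize}
\item $(I-P)x_\alpha^{**}\to 0$ weakly in $F$, hence in norm. So $\|Px_\alpha^{**}\|\le 1+\varepsilon_\alpha$ with $\varepsilon_\alpha=\|(I-P)x_\alpha^{**}\|\to 0$.
\item Put $y_\alpha=Px_\alpha^{**}/\max\{1,\|Px_\alpha^{**}\|\}\in X_1$. Then $\|y_\alpha-Px_\alpha^{**}\|\le\varepsilon_\alpha\to 0$, so $y_\alpha\to x$ weakly.
\item Since $x$ is a w-PC of $X_1$, $y_\alpha\to x$ in norm. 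Therefore $Px_\alpha^{**}\to x$ and $x_\alpha^{**}=Px_\alpha^{**}+(I-P)x_\alpha^{**}\to x$ in norm.
\end{itemize}
This argument works for every quasi-reflexive space. Your induction via the quasi-reflexivity of $X^{**}$ therefore goes through unchanged.
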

\begin{proof}
	Since $X$ is quasi-reflexive, we have $X^{**}=X\oplus F$ for some finite dimensional subspace $F$ of $X^{**}$. Let $P:X^{**}\to X^{**}$ be the corresponding contractive projection such that ker$(P)=F$ and range$(P)=X$ and let $x\in X_{1}$ be a w-PC. We first show that it remains a w-PC in $X^{**}_{1}$ by showing that any net in $X_{1}^{**}$ converging to $x$ in the w-topology, converges in the norm. Let $\{x_{\alpha}^{**}\}_{\alpha\in J}\subseteq X_{1}^{**}$ be a net such that $x_{\alpha}^{**}\to x$ in the w-topology. Since $P$ is weakly continuous, we have that $P(x_{\alpha}^{**})\to x$ in the w-topology, so in the norm. Note that the net $\{x_{\alpha}^{**}-P(x_{\alpha}^{**})\}_{\alpha\in J}\subseteq F$ and converges to $0$ in the w-topology. Since $F$ is finite dimensional, we have $x_{\alpha}^{**}-P(x_{\alpha}^{**})\to 0$ in norm. By adding the last two norm-convergences we get $x_{\alpha}^{**}\to x$ in the norm. A similar line of reasoning shows that $x$ retains the w-PC property in every higher even-order dual unit ball.
\end{proof}

The following note outlines several open problems that are motivated by our earlier results.

\begin{Q}
	Conclusion of Theorem~\ref{T25} is unknown without assuming that $X$ is \textnormal{w}$^*$-sequently dense in $X^{**}$.
\end{Q}
\begin{Q}
	It is not known whether the conclusion of Theorem~\ref{T210} holds without assuming either that both $X^*$ and $X^{**}$ satisfy the R.N.P., or that $X^{**}/X$ is a separable space. 
\end{Q}
\begin{Q}
	It is not known whether the space $L^{p}(\mu,X)$ is \textnormal{w}$^*$-sequently dense in its bidual whenever $X$ is \textnormal{w}$^*$-sequently dense in its bidual. Even in the case, where both $X^*$ and $X^{**}$ have the R.N.P., This question remains open.
\end{Q}
\section*{Acknowledgements}
The author wishes to express sincere gratitude to Prof. T. S. S. R. K. Rao and Dr. Priyanka Grover for their exceptional guidance, unwavering support, and inspiring mentorship throughout this work. Their profound insights, thoughtful feedback, and constant encouragement have been instrumental at every stage of this research. The author feels truly privileged to have had the opportunity to learn under their supervision.

The author declares that there is no conflict of interest.
\bibliographystyle{plain, abbrv}

\end{document}